\providecommand{\keywords}[1]{\textbf{\textit{keywords:}} #1}
\title{Existence and uniqueness of the maximum likelihood estimator for models with a Kronecker product covariance structure}
\author{Beata Ro\'s\textsuperscript{a}, Fetsje Bijma\textsuperscript{a}, Jan C. de Munck\textsuperscript{b} and Mathisca C.M. de Gunst\textsuperscript{a}}
\begin{document}
\maketitle
\noindent\textsuperscript{a} Department of Mathematics, Faculty of Exact Sciences, VU University Amsterdam, De Boelelaan 1081, 1081 HV, Amsterdam, The Netherlands\\
\textsuperscript{b} Department of Physics and Medical Technology, VU University Medical Center, De Boelelaan 1118,1081 HZ Amsterdam, The Netherlands\\

\begin{abstract}
This paper deals with multivariate Gaussian models for which the covariance matrix is a Kronecker product of two matrices. We consider maximum likelihood estimation of the model parameters, in particular of the covariance matrix. There is no explicit expression for the maximum likelihood estimator of a Kronecker product
covariance matrix. The main question in this paper is whether the maximum likelihood estimator of the covariance matrix exists and if it is unique. The answers are different for different models that we consider.\\\\
\keywords{Kronecker product structure, covariance structure, maximum likelihood equations, existence, uniqueness, flip-flop algorithm}
\end{abstract}

\section{Introduction}
\noindent
In many studies data are measured in multiple domains, like space, time and frequency. In this paper we focus on the situation where data are measured in two domains. An example of such data is multi-channel EEG, where several sensors on the scalp measure electric potential differences at sample rates of typically 200 to 2000 Hz \cite{DeMunck}. If a subject is repeatedly exposed to a  stimulus, the signal can be partitioned into separate measurements, 
one measurement being the signal in between two consecutive stimuli. If there are $p$ space points and $q$ time points, then each data measurement can be expressed as a $p$ by $q$ matrix $X$ (notation: $X\in\mathcal{M}_{p,q}\left(\mathbb{R}\right)$). Often it is a reasonable assumption that vec$\left(X\right)\sim\mathcal{N}\left(\mu,\Sigma\right)$ where the vec operator vectorizes a matrix by stacking its columns into one vector. In this paper we consider $n$ such data matrices, $X_{1},\ldots,X_{n}$, which for example represent the different measurements of EEG.

In certain situations, it can be assumed that the covariance matrix $\Sigma$ can be separated into two components. Each component represents the (co)variances within one domain. The assumption Cov$\left(X(i_{1},j_{1}),X(i_{2},j_{2})\right)=\Gamma(i_{1},i_{2})\Psi(j_{1},j_{2})$ for all $i_{1},i_{2},j_{1},j_{2}$ is equivalent to Cov$\left(\text{vec}\left(X\right)\right)=\Psi\otimes\Gamma$, where $\Gamma\in\mathcal{M}_{p,p}\left(\mathbb{R}\right)$, $\Psi\in\mathcal{M}_{q,q}\left(\mathbb{R}\right)$ and $\otimes$ denotes the Kronecker product.
In this paper it is assumed that vec$\left(X\right)\sim\mathcal{N}\left(\mu,\Psi\otimes\Gamma\right)$, and we consider the problem of estimating $\mu$, and  $\Psi\otimes\Gamma$ from a series of independent observations of $X$ by maximum likelihood.

Models with a Kronecker product covariance structure have been proposed for many different situations. For spatiotemporal estimation of the covariance structure of EEG/MEG data see, for instance, \cite{BijmaEtAl,DeMunck,Huizenga,Torresani}; other examples are spatio-temporal analysis of environmental data \cite{Pinel,Mardia}, missing data imputation for microarray or Netflix movie rating data  \cite{Netflix},
and multi-task learning for detecting land mines in multiple fields or recognizing faces between different subjects \cite{MultiTask}.
Several tests have been developed for testing whether or not the covariance matrix is separable, that is, is a Kronecker product of two matrices \cite{Zimmerman}. Two general references for properties of Kronecker products are \cite{VanLoan,Magnus}.

The focus of the present paper is on existence and uniqueness of the maximum likelihood estimator of the covariance matrix of a multivariate Gaussian model with Kronecker product covariance structure. Therefore, one aspect is whether the likelihood function attains a maximum for some positive definite $\widehat{\Psi\otimes\Gamma}$. If the answer is positive, the second question is whether there is a unique positive definite maximizer $\widehat{\Psi\otimes\Gamma}$ of the likelihood function.
Finding the maximum likelihood estimate of the mean is straightforward.
On the other hand, maximum likelihood estimates for the Kronecker product covariance matrix are typically obtained by numerical approximation, because no explicit solution of the likelihood equations for its components $\Gamma$ and
$\Psi$ exists.
An iterative method that alternates between updating the estimate of one component of the covariance matrix and updating the estimate of the other component has been proposed in \cite{Dutilleul}.
Convergence of this so-called flip-flop algorithm is studied in \cite{Dutilleul,lu_zim}.
In \cite{Werner} theoretical asymptotic properties of the flip-flop algorithm are considered and the algorithm's performance for small sample sizes is investigated with a simulation study.
The special case in which one of the two matrices adopts a persymmetric structure is treated in \cite{JanssonEtAl}, and an adaptation of the algorithm
for estimation of a  Kronecker product
of two Toeplitz matrices is discussed in \cite{Wirfalt}.
In this paper we do not investigate properties of the flip-flop algorithm, i.e.\ of the method used for finding an approximation of the maximum likelihood estimate. Instead, we study existence and uniqueness of the maximum likelihood estimator of a covariance matrix that has a Kronecker product structure.
The results given in this work show that existence and uniqueness of the maximum likelihood estimator
cannot be taken for granted.

Because $c\Psi\otimes\frac{1}{c}\Gamma=\Psi\otimes\Gamma$ for any $c>0$, the components $\Gamma$ and $\Psi$ of a Kronecker product are not identifiable from the Kronecker product. This issue of trivial non-uniqueness can be addressed by making an additional identifiability constraint. Because $\Gamma$ and $\Psi$ are positive definite, the constraint can be that we fix a particular diagonal element of $\Gamma$ or $\Psi$ to be equal to 1. Another possibility is to assume that the determinant of one of the two components is equal to 1. Using one of the above mentioned identifiability constraints does not restrict the model in any way. The (non-trivial) uniqueness of the Kronecker product estimator $\widehat{\Psi\otimes\Gamma}$ is therefore equivalent to the (non-trivial) uniqueness of the corresponding pair $\left(\hat{\Gamma},\hat{\Psi}\right)$ with an additional constraint of the above mentioned type. The reason is that there is a one-to-one correspondence between $\Psi\otimes\Gamma$ and $\left(\Gamma,\Psi\right)$ with such constraint. In this paper the term 'uniqueness' is solely used for uniqueness of $\widehat{\Psi\otimes\Gamma}$, or equivalently $\left(\hat{\Gamma},\hat{\Psi}\right)$ with an identifiability constraint, and not for uniqueness of $\hat{\Psi}$ and $\hat{\Gamma}$ separately.

Analysis of existence and uniqueness of the maximum likelihood estimator of the covariance matrix under the Kronecker product structure is not trivial, because the parameter space is not convex and the set of multivariate normal distributions with a Kronecker product covariance matrix is a curved exponential family.
These topics were also studied in the papers \cite{Dutilleul,von_Rosen} that have stimulated a lot of research in the area. A condition that is claimed to be necessary and sufficient for existence is given in \cite{Dutilleul} with inadequate justification. We cannot verify this condition, but instead prove existence under a stronger condition.
In \cite{Dutilleul} and \cite{von_Rosen} uniqueness is considered. We believe that the results about the conditions for uniqueness in those two studies are not correct and demonstrate this by counterexamples. In our paper we additionally consider models for which more restrictions are imposed on $\Gamma$ or $\Psi$. We give a novel proof for the existence and uniqueness of the maximum likelihood estimator
for the case that at least one of $\Gamma$ and $\Psi$ is constrained to be diagonal.
In the next section the model and the likelihood equations are presented,
and existence and uniqueness of the maximum likelihood estimator
for unrestricted models that are only constrained by positive definiteness are discussed.
In Section~3 our results for the model with both components diagonal are presented. In Section~4 the results are extended to the case with only one component diagonal.
We conclude with a discussion of our results.
Proofs are given in the Appendix.

\section{The general model}
\noindent
Suppose that the observations $X_{1},\ldots,X_{n}\in\mathcal{M}_{p,q}\left(\mathbb{R}\right)$ satisfy
\begin{equation}
\label{model}
\text{vec}\left(X_{k}\right)\sim\mathcal{N}\left(\mu,\Psi\otimes\Gamma\right),
\end{equation}
$k=1,\ldots,n$, and that $\text{vec}\left(X_{1}\right),\ldots,\text{vec}\left(X_{n}\right)$ are independent.
Here $\mu$ is a vector of length $pq$,
and $\Gamma\in\mathcal{M}_{p,p}\left(\mathbb{R}\right)$ and $\Psi\in\mathcal{M}_{q,q}\left(\mathbb{R}\right)$ are positive definite matrices; $\mu,\Gamma,\Psi$ are unknown parameters.
It is important to note that for $\Gamma,\Psi$ positive definite, the Kronecker product $\Psi\otimes\Gamma$ is also a positive definite matrix \cite{book_matrix}. This implies that $\Psi\otimes\Gamma$ is a covariance matrix of some normally distributed random vector. Therefore the model is well defined.

We consider maximum likelihood estimation for the vector $\mu$ and the covariance matrix $\Psi\otimes\Gamma$.  Let $M\in\mathcal{M}_{p,q}\left(\mathbb{R}\right)$ be such that vec$\left(M\right)=\mu$.
The likelihood function for $M, \Gamma$ and $\Psi$ is\\
$L(M,\Gamma,\Psi)=$
\begin{equation}
\label{likelihood}
\left(2\pi\right)^{-\frac{1}{2}pqn}
\left|\Gamma\right|^{-\frac{1}{2}qn}
\left|\Psi\right|^{-\frac{1}{2}pn}
\text{etr}\left(-\frac{1}{2}\Psi^{-1}\sum_{k=1}^{n}
{\left(X_{k}-M\right)^{T}\Gamma^{-1}\left(X_{k}-M\right)}\right),
\end{equation}
where $\text{etr}\left(A\right)=\exp\left(\text{trace}\left(A\right)\right)$,
and $|A|$ denotes the determinant of $A$.
If
\begin{displaymath}
\hat{M}=\frac{1}{n}\sum_{k=1}^{n}{X_{k}},
\end{displaymath}
then the maximum likelihood estimator for the mean vector $\mu$ is $\hat{\mu}=\text{vec}\left(\hat{M}\right)$. The likelihood equations for $\Gamma$ and $\Psi$ take the following form:

\begin{subequations}
\label{likelihood_equations}
\begin{align}
\Gamma&=\frac{1}{nq}\sum_{k=1}^{n}{\left(X_{k}-\hat{M}\right)
\Psi^{-1}\left(X_{k}-\hat{M}\right)^{T}},\label{likelihood_equations1}\\
\Psi&=\frac{1}{np}\sum_{k=1}^{n}{\left(X_{k}-\hat{M}\right)^{T}
\Gamma^{-1}\left(X_{k}-\hat{M}\right)}. \label{likelihood_equations2}
\end{align}
\end{subequations}
See, for instance  \cite{Bijma2004,Dutilleul} or \cite{Mardia} 
for a derivation of these equations.
Solving the first equation is equivalent to maximizing the likelihood with respect to $\Gamma$ for a fixed $\Psi$, and vice versa for the second equation \cite{Mardia}. Therefore, solutions of the set \eqref{likelihood_equations1} and \eqref{likelihood_equations2} correspond to local maxima of the likelihood function. In fact, one can study the likelihood equations to derive properties of the maximum likelihood estimator $\widehat{\Psi\otimes\Gamma}$ of the covariance matrix $\Psi\otimes\Gamma$. 
Indeed, because there is a one-to-one correspondence between the solutions of the likelihood equations and the set of all critical points of the likelihood function, all critical points of the likelihood function are local maxima.
It is worth noting that if the equations have a solution $\left(\Gamma,\Psi\right)$, the Kronecker product $\Psi\otimes\Gamma$ must be positive definite.
Moreover, if $\widehat{\Psi\otimes\Gamma}$ is the maximum likelihood estimator of $\Psi\otimes\Gamma$, it corresponds to a global (hence also a local) maximum of the likelihood function, and therefore the $\hat{\Gamma}$ and $\hat{\Psi}$ for which $\widehat{\Psi\otimes\Gamma}=\hat{\Psi}\otimes\hat{\Gamma}$ must satisfy the likelihood equations. If, on the other hand, no positive definite $\Gamma$ and $\Psi$ exist that satisfy \eqref{likelihood_equations1} and \eqref{likelihood_equations2}, the likelihood function does not have any local or global maxima on the set of positive definite Kronecker products. In such case the maximum likelihood estimator of the covariance matrix does not exist. Since there is this link between the solutions of the likelihood equations and the maximum likelihood estimator of $\Psi\otimes\Gamma$, one can study the likelihood equations to derive the properties of the maximum likelihood estimator of the covariance matrix. 

Due to the structure of the likelihood equations there are no explicit formulas for the solutions $\left(\Gamma,\Psi\right)$ .
We note that if \eqref{likelihood_equations1}
is satisfied for a pair $(\Gamma, \Psi)$, then the likelihood in $(\hat M, \Gamma, \Psi)$ attains the value
\begin{subequations}
\begin{align}
&\left(2\pi\right)^{-\frac{1}{2}pqn}
\left|\frac{1}{nq}\sum_{k=1}^{n}{\left(X_{k}-\hat{M}\right)\Psi^{-1}
\left(X_{k}-\hat{M}\right)^{T}}\right|^{-\frac{1}{2}qn}
\left|\Psi\right|^{-\frac{1}{2}pn}e^{-\frac{1}{2}npq}.\label{likelihood2a}\\
\intertext{If \eqref{likelihood_equations2} is satisfied for $\left(\Gamma,\Psi\right)$, the likelihood in $\left(\hat{M},\Gamma,\Psi\right)$ equals}
&\left(2\pi\right)^{-\frac{1}{2}pqn}\left|\frac{1}{np}\sum_{k=1}^{n}{\left(X_{k}-\hat{M}\right)^{T}\Gamma^{-1}\left(X_{k}-\hat{M}\right)}\right|^{-\frac{1}{2}pn}\left|\Gamma\right|^{-\frac{1}{2}qn}e^{-\frac{1}{2}npq}.\label{likelihood2b}
\end{align}
\end{subequations}

\subsection{Existence}
\noindent
It can happen that the maximum likelihood estimator of $\Psi\otimes\Gamma$ does not exist.
A necessary condition for the existence of the maximum likelihood estimator of the covariance matrix under model \eqref{model} has been derived in \cite{Dutilleul}
and is stated in the following theorem.
\newtheorem{necessary}{Theorem}
\begin{necessary}
\label{necessary}
Suppose $X_{1},\ldots,X_{n}\in\mathcal{M}_{p,q}\left(\mathbb{R}\right)$ satisfy model \eqref{model}. If the maximum likelihood estimator of $\Psi\otimes\Gamma$ exists, it must be that $n>\max\{\frac{p}{q},\frac{q}{p}\}$.
\end{necessary}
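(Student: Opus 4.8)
The plan is to turn the hypothesis into a rank condition on the centred observations and then read off the bound on $n$ by a dimension count, using the correspondence between the estimator and the likelihood equations noted above.

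First I would record that if $\widehat{\Psi\otimes\Gamma}$ exists then it is a (global, hence local) maximum of the likelihood, so there is a pair of positive definite matrices $(\widehat\Gamma,\widehat\Psi)$ with $\widehat\Psi\otimes\widehat\Gamma=\widehat{\Psi\otimes\Gamma}$ satisfying the likelihood equations \eqref{likelihood_equations1}--\eqref{likelihood_equations2}. Writing $Y_k=X_k-\hat M$, so that $\sum_{k=1}^{n}Y_k=0$, equation \eqref{likelihood_equations1} reads $\widehat\Gamma=\frac{1}{nq}\sum_{k=1}^{n}Y_k\widehat\Psi^{-1}Y_k^{T}$, and since $\widehat\Gamma$ is positive definite the $p\times p$ matrix on the right-hand side must have rank $p$.

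The core of the argument is the identity
\[
\operatorname{rank}\Bigl(\sum_{k=1}^{n}Y_k\widehat\Psi^{-1}Y_k^{T}\Bigr)=\operatorname{rank}\bigl[\,Y_1\mid\cdots\mid Y_n\,\bigr],
\]
where $[\,Y_1\mid\cdots\mid Y_n\,]\in\mathcal{M}_{p,nq}(\mathbb{R})$ denotes horizontal concatenation of the $Y_k$. Granting it, I would finish as follows: because $\sum_k Y_k=0$ the columns of $Y_n$ are linear combinations of those of $Y_1,\ldots,Y_{n-1}$, so $\operatorname{rank}[\,Y_1\mid\cdots\mid Y_n\,]=\operatorname{rank}[\,Y_1\mid\cdots\mid Y_{n-1}\,]\le(n-1)q$; hence $p\le(n-1)q$. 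Applying the same reasoning to \eqref{likelihood_equations2}, with $p$ and $q$ and with $\Gamma$ and $\Psi$ interchanged, gives $q\le(n-1)p$. Together these yield $n\ge 1+\max\{p/q,\,q/p\}$, which in particular implies $n>\max\{p/q,\,q/p\}$.

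The one step needing care is the rank identity. To prove it, factor $\widehat\Psi^{-1}=LL^{T}$ with $L$ invertible; each summand becomes $(Y_kL)(Y_kL)^{T}$, which is positive semidefinite with the same column space as $Y_k$ (since $L$ is invertible and $\operatorname{colsp}(BB^{T})=\operatorname{colsp}(B)$ for any real $B$). The column space of a sum of positive semidefinite matrices is the sum of their column spaces --- equivalently, the kernel of the sum is the intersection of the kernels --- so both sides of the identity have column space $\sum_k\operatorname{colsp}(Y_k)$, and hence the same rank. This is the only place where a small argument is required; the remaining ingredients (that $\widehat\Gamma,\widehat\Psi$ are positive definite, which forces the rank above to be full, and the bookkeeping with $\hat M$) are immediate from the model and the definitions.
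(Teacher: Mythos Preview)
Your argument is correct. The paper does not supply its own proof of this theorem; it attributes the result to Dutilleul \cite{Dutilleul} and only remarks later that Dutilleul's reasoning proceeds by showing the flip-flop updates have full rank. Your approach is essentially the same rank-counting idea, phrased cleanly as a direct consequence of the likelihood equations: positive definiteness of $\widehat\Gamma$ forces $\sum_k Y_k\widehat\Psi^{-1}Y_k^{T}$ to have rank $p$, and your rank identity (which is indeed just $\sum_k Z_kZ_k^{T}=ZZ^{T}$ for $Z=[Y_1L\mid\cdots\mid Y_nL]$) reduces this to $\operatorname{rank}[Y_1\mid\cdots\mid Y_n]=p$, whence the dimension bound $p\le(n-1)q$ after using $\sum_kY_k=0$.

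One small remark: your argument actually yields the sharper conclusion $n\ge 1+\max\{p/q,q/p\}$, not merely $n>\max\{p/q,q/p\}$. Since $n$ is an integer these can differ (e.g.\ $p=2$, $q=3$ gives $n\ge 3$ from your bound but only $n\ge 2$ from the stated inequality), so you have in fact proved slightly more than the theorem claims.
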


\noindent
In \cite{Dutilleul} it is claimed that this condition is also sufficient for the existence as well as for the uniqueness of the maximum likelihood estimator of $\Psi\otimes\Gamma$. However, uniqueness does not follow from this condition, as is shown in the next section. Moreover, it is not known whether it guarantees existence, because it is not sufficient to show that all updates of the flip-flop algorithm have full rank as is done in \cite{Dutilleul}. It could still happen that the sequence of updates converges (after infinitely many steps) to a Kronecker product that does not have a full rank with the likelihood converging to its supremum. The reason is that the space $\{\Psi\otimes\Gamma:\Gamma\in\mathcal{M}_{p,p}\left(\mathbb{R}\right),
\Psi\in\mathcal{M}_{q,q}\left(\mathbb{R}\right); \Gamma,\Psi \ \text{positive definite}\}$ with any norm is not closed.
Below we shall prove existence under a stronger condition on the sample size $n$. For this the following lemma will be needed.
\newtheorem{lemma}[necessary]{Lemma}
\begin{lemma}
\label{comp_KP}
Let $\mathcal{K}$ be the space of non-negative definite $pq\times pq$ matrices that have a Kronecker product structure such that each $K\in\mathcal{K}$ can be expressed as $K=A\otimes B$, where $A\in\mathcal{M}_{q,q}\left(\mathbb{R}\right)$, $B\in\mathcal{M}_{p,p}\left(\mathbb{R}\right)$ are also non-negative definite.
Let $\mathcal{K}$ be equipped with the Frobenius norm.
Then $\mathcal{K}$ is closed.
\end{lemma}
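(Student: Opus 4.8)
The plan is to show sequential closedness: take an arbitrary sequence $K_m\in\mathcal{K}$ with $K_m\to K$ in the Frobenius norm, and prove $K\in\mathcal{K}$. Write $K_m=A_m\otimes B_m$ with $A_m\in\mathcal{M}_{q,q}\left(\mathbb{R}\right)$ and $B_m\in\mathcal{M}_{p,p}\left(\mathbb{R}\right)$ non-negative definite. First I would dispose of the trivial case $K=0$: then $K=0\otimes 0\in\mathcal{K}$ and there is nothing to prove. So assume $K\neq 0$; then $\|K_m\|_F\to\|K\|_F>0$, hence $K_m\neq 0$, and therefore $A_m\neq 0$ and $B_m\neq 0$, for all sufficiently large $m$. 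Discarding finitely many terms, we may assume $A_m,B_m\neq 0$ for every $m$.

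The key difficulty is the scaling indeterminacy $A_m\otimes B_m=(cA_m)\otimes(c^{-1}B_m)$ for $c>0$, which I would remove by normalizing: replace $A_m$ by $A_m/\|A_m\|_F$ and $B_m$ by $\|A_m\|_F\,B_m$, which leaves $K_m$ unchanged and makes $\|A_m\|_F=1$ for all $m$. Using the elementary identity $\|A\otimes B\|_F=\|A\|_F\|B\|_F$ we then get $\|B_m\|_F=\|A_m\otimes B_m\|_F=\|K_m\|_F\to\|K\|_F$, so the sequence $(B_m)$ is bounded. Consequently $(A_m)$ lies in the intersection of the unit Frobenius sphere with the closed cone of non-negative definite $q\times q$ matrices, a compact set, and $(B_m)$ lies in a bounded subset of the closed cone of non-negative definite $p\times p$ matrices. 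By the Bolzano--Weierstrass theorem we may pass to a subsequence along which $A_m\to A$ and $B_m\to B$; since the cone of non-negative definite matrices is closed, both $A$ and $B$ are non-negative definite.

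Finally I would invoke continuity of the Kronecker product. Since $(A,B)\mapsto A\otimes B$ is bilinear on finite-dimensional spaces and $\|A_m\|_F=1$ is bounded,
\[
\|A_m\otimes B_m-A\otimes B\|_F\le\|A_m\|_F\,\|B_m-B\|_F+\|A_m-A\|_F\,\|B\|_F\longrightarrow 0
\]
along the subsequence, so $K_m\to A\otimes B$ along it. As the full sequence converges to $K$, uniqueness of limits gives $K=A\otimes B$ with $A,B$ non-negative definite, i.e.\ $K\in\mathcal{K}$, which proves closedness. The one genuinely delicate point is the normalization step: without fixing the scale first, the factors $A_m$ and $B_m$ could diverge (or collapse to zero) even though $K_m$ converges, so the compactness argument would fail; everything after that is routine.
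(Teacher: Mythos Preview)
Your proof is correct and follows the same overall strategy as the paper's: establish sequential closedness by first normalizing away the scaling indeterminacy in $A_m\otimes B_m$ and then passing to the limit. The execution differs in a minor but instructive way. The paper normalizes by fixing a single diagonal entry, setting $A_n(1,1)=1$ after reducing (without loss of generality) to the case $K(1,1)\neq 0$; it then reads off the limit factors \emph{directly} from the block structure of $K$: the upper-left $p\times p$ block of $K$ is $B$, and the remaining $p\times p$ blocks are scalar multiples $a_{ij}B$, which yields $A$. No subsequence is needed. You instead normalize by Frobenius norm, bound both factor sequences via $\|A\otimes B\|_F=\|A\|_F\|B\|_F$, and invoke Bolzano--Weierstrass plus continuity of the Kronecker product. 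The paper's route is more constructive, giving explicit formulas for $A$ and $B$ in terms of $K$, while yours is arguably cleaner in that it avoids the case reduction to a particular nonzero diagonal entry and works uniformly once $K\neq 0$.
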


\begin{proof}
Let $K_{n}=A_{n}\otimes B_{n}\in\mathcal{K}$ and $K_{n}\rightarrow K$ as $n\rightarrow\infty$. If all diagonal elements of $K$ are zero, then $K$ must consist of only zeros. Therefore, in particular $K\in\mathcal{K}$. To consider the non-trivial case, assume that some of the diagonal elements of $K$ are not equal to zero. Without loss of generality, $K\left(1,1\right)\neq 0$.
Because $cA_n\otimes \frac{1}{c}B_n=A_n\otimes B_n$, we may assume that $A_{n}\left(1,1\right)=1$ for $n$ sufficiently large. By the definition of the Kronecker product it follows easily that
$B_{n}\rightarrow B$ as $n\rightarrow\infty$, where $B$ is equal to the upperleft $p\times p$ block of $K$.
But since $A_{n}\left(i,j\right)B_{n}$ converges to the $(i,j)$-th $p\times p$ block of $K$
as $n\rightarrow\infty$, this block should
be equal to $a_{ij}B$ for some value $a_{ij}$, $i,j=1,\ldots, q$.
If $A\in\mathcal{M}_{q,q}\left(\mathbb{R}\right)$ is
defined by $A\left(i,j\right)=a_{ij}$, we see that $K=A\otimes B$.
Moreover, by continuity arguments $A$, $B$ and $K$
are non-negative definite.
Hence, $K\in\mathcal{K}$ and therefore $\mathcal{K}$ is closed.
\end{proof}

\noindent
We now formulate the main result of this subsection.
\newtheorem{existence_new}[necessary]{Theorem}
\begin{existence_new}
\label{existence_new}
Let $X_{1},\ldots,X_{n}\in\mathcal{M}_{p,q}\left(\mathbb{R}\right)$ satisfy model \eqref{model}.
If $n> pq$, then the maximum likelihood estimator of $\Psi\otimes\Gamma$ exists with probability $1$.
\end{existence_new}

\begin{proof}
The proof uses a result from \cite{Burg}, for which the sample covariance matrix has to be positive definite. Therefore the assumption that $n>pq$ is made, which guarantees positive definiteness of the sample covariance matrix with probability $1$.
Let $R$ denote the covariance matrix $\Psi\otimes\Gamma$, and
 $S$ be the sample covariance matrix.
Furthermore, let $\mathcal{K}$ be defined as in Lemma~\ref{comp_KP}.
Then $R\in\mathcal{K}$.
By taking the logarithm of the likelihood function and dropping the terms that do not involve the covariance matrix $R$, we see that maximization of the likelihood
with respect to $\Gamma$ and $\Psi$ for $\Gamma$ and $\Psi$ positive definite matrices, is equivalent to maximization over $\mathcal{K}$ of the function $g(R)$ given by

\begin{equation}
\label{gR}
g\left(R\right)=-\text{log}\left|R\right|-\text{tr}\left(R^{-1}S\right).
\end{equation}

\noindent
Because $\mathcal{K}$ contains at least one positive definite matrix, the theorem now is an immediate consequence of Lemma~\ref{comp_KP}
and the result proved in \cite{Burg}
that if $S$ is positive definite and $R$ belongs to a closed subset $\mathcal{K}$
of the set of non-negative definite matrices, then there exists a
positive definite solution of the maximization of \eqref{gR} over $R\in \mathcal{K}$.
We note that the fact that here $\mu$ is unknown, whereas in \cite{Burg} $\mu$ is assumed to be $0$, only means that we need $n>pq$ instead of $n\geq pq$. Otherwise it has no consequences.
\end{proof}

\noindent
In conclusion, if $n\le \text{max}\{\frac{p}{q},\frac{q}{p}\}$, the maximum likelihood estimator of the covariance matrix does not exist. However, if $n>pq$, it exists with probability $1$.
As yet, there do not seem to be existence results for the case $n\in\left[\text{max}\{\frac{p}{q},\frac{q}{p}\}+1,pq\right]$.

\subsection{Uniqueness}
\label{nonuniqueness}
\noindent
In the papers \cite{Dutilleul} and  \cite{von_Rosen} the question about uniqueness of the maximum likelihood estimator of $\Psi\otimes\Gamma$ has been studied by investigating whether the likelihood equations have a unique solution. 
Unfortunately, the sufficient conditions derived in both papers do not hold in general, which we explain below.

In \cite{Dutilleul} and \cite{Lee} it is claimed that if $n>\max\{\frac{p}{q},\frac{q}{p}\}$, then there is a unique $\widehat{\Psi\otimes\Gamma}$ such that the corresponding pair $\left(\hat{\Gamma},\hat{\Psi}\right)$ satisfies the likelihood equations \eqref{likelihood_equations}.
The argument given is based on the results in Section 18.24 of \cite{Kendall}. However, these results are not applicable, because they only apply to random vectors having a distribution within an exponential family, whereas the $\mathcal{N}\left(\mu,\Psi\otimes\Gamma\right)$ distributions form
a curved exponential family rather than an exponential family. The definition and properties of an exponential and a curved exponential family can for instance be found in \cite{Keener}.

In \cite{von_Rosen} uniqueness is claimed if $n>\max\{p,q\}$. However, we found a counterexample, which is described in Section \ref{counterexample}. Therefore, part of the proof of Theorem~3.1 in that paper does not hold. This seems to be the part in which, using the notation of \cite{von_Rosen}, it is assumed that $P_{\Sigma_{1}}-P_{\Sigma_{2}}=0$ holds if and only if $\Sigma_{1}=\Sigma_{2}$. This is not true, though, because for all solutions of the likelihood equations in our counterexample,
the $\Sigma$ is different, but all $P_{\Sigma}$ are the same.

In \cite{lu_zim} results of a computational experiment were reported which showed that for $n=2$ or $n=3$, the flip-flop algorithm converged to many different estimates, depending on the starting value, while the likelihood function at each of these estimates was identical. Our results give a mathematical explanation of these empirical findings and confirm that the likelihood may have multiple points at which its global maximum is attained. Each of these points corresponds to a different solution of the likelihood equations.

\subsection{Non-uniqueness for $n=2$, $p=q$}
\noindent
In this section a counterexample to the statement that $n>\max\{\frac{p}{q},\frac{q}{p}\}$ would be sufficient for uniqueness of the maximum likelihood estimator of $\Psi\otimes\Gamma$ is given for $n=2$ and $p=q$. We will show that with probability $1$ there exist multiple solutions of the likelihood equations and each of them corresponds to a different maximum likelihood estimator of $\Psi\otimes\Gamma$.
Because for $n=2$, $\hat{M}=\frac{1}{2}\left(X_{1}+X_{2}\right)$, and $X_{1}-\hat{M}=-\left(X_{2}-\hat{M}\right)=\frac{1}{2}X_{1}-\frac{1}{2}X_{2}$, the likelihood equations for $\Gamma$ and $\Psi$ become
\begin{subequations}
\begin{align}
\Gamma&=\frac{1}{p}\left(\frac{1}{2}X_{1}-\frac{1}{2}X_{2}\right)
\Psi^{-1}\left(\frac{1}{2}X_{1}-\frac{1}{2}X_{2}\right)^{T},\label{first_2}\\
\Psi&=\frac{1}{p}\left(\frac{1}{2}X_{1}-\frac{1}{2}X_{2}\right)^{T}
\Gamma^{-1}\left(\frac{1}{2}X_{1}-\frac{1}{2}X_{2}\right).\label{second_2}
\end{align}
\end{subequations}

\noindent
Note that $X_{1}-X_{2}$ is a square matrix and $X_{1}-X_{2}$ is invertible with probability $1$. Let us then assume that $X_{1}-X_{2}$ is invertible. We obtain from  \eqref{first_2} that
\begin{displaymath}
p\left(\frac{1}{2}X_{1}-\frac{1}{2}X_{2}\right)^{-1}\Gamma\left(\left(\frac{1}{2}
X_{1}-\frac{1}{2}X_{2}\right)^{T}\right)^{-1}=\Psi^{-1},
\end{displaymath}
which is equivalent to  \eqref{second_2}.
This shows that in the case where $n=2$, $p=q$ and rank$\left(X_{1}-X_{2}\right)=p$, equation \eqref{first_2} implies equation \eqref{second_2}. Therefore one can take for $\Psi$ in \eqref{first_2} any positive definite $q\times q$ matrix $\hat{\Psi}$ and calculate the corresponding $\hat{\Gamma}$ from \eqref{first_2}.
Then $\hat{\mu}=\frac{1}{2}\sum_{k=1}^{2}{\text{vec}\left(X_{k}\right)}$ and $\hat{\Gamma}, \hat{\Psi}$ satisfy the likelihood equations.

Moreover, because for all different choices of $\hat{\Psi}$, and $\hat{\Gamma}$ satisfying \eqref{first_2} with this $\hat{\Psi}$, the likelihood function attains the value given in \eqref{likelihood2a},
and since for the case $n=2$, $p=q$,
\begin{displaymath}
\left|\hat{\Gamma}\right|^{-\frac{1}{2}qn}\left|\hat{\Psi}\right|^{-\frac{1}{2}pn}
=\left|\frac{1}{2}X_{1}-\frac{1}{2}X_{2}\right|^{-2p},
\end{displaymath}
we have that for any pair $\left(\hat{\Gamma},\hat{\Psi}\right)$ that satisfies the likelihood equations, the likelihood function equals
\begin{displaymath}
\left(2\pi\right)^{-p^{2}}p^{-p}
\left|\frac{1}{2}X_{1}-\frac{1}{2}X_{2}\right|^{-2p}\exp\left(-p^{2}\right),
\end{displaymath}
which does not depend on $\left(\hat{\Gamma},\hat{\Psi}\right)$.
This means that different solutions of the likelihood equations correspond to
different global maximizers of the likelihood function which all have the same likelihood value. We have actually shown that in this case the likelihood equations can be solved analytically and the space of maximum likelihood estimators of the covariance matrix is $\{\Psi\otimes\Gamma:\Psi\text{- positive definite},\Gamma=\frac{1}{p}\left(\frac{1}{2}X_{1}-\frac{1}{2}X_{2}\right)\Psi^{-1}\left(\frac{1}{2}X_{1}-\frac{1}{2}X_{2}\right)^{T}\}$.
Hence, if $n=2$, $p=q$, and $X_1-X_2$ is invertible,
 the maximum likelihood estimator of the covariance matrix $\Psi\otimes\Gamma$ is not unique.


\subsection{Non-uniqueness for $n=3$, $p=q=2$}
\label{counterexample}
\noindent
In Theorem 3.1 in \cite{von_Rosen} it is stated that if $n>\max\{p,q\}$, then the likelihood equations have a unique solution. In this section it is shown that for $n=3$ and $p=q=2$, uniqueness is not always the case. It strongly depends on the data, and non-uniqueness of $\left(\hat{\Gamma},\hat{\Psi}\right)$ that satisfies the likelihood equations and an identifiability constraint implies non-uniqueness of the maximum likelihood estimator $\widehat{\Psi\otimes\Gamma}$ of $\Psi\otimes\Gamma$.
Derivations of the results in this section can be found in \ref{nonuniqueness_von_Rosen}.
Moreover, it is shown in \ref{nonuniqueness_von_Rosen} that
for a given data set it can be determined
 whether there is unique maximum likelihood estimator for $\Psi\otimes\Gamma$  by computing the discriminant of a quadratic polynomial with coefficients that are
functions of the data.

If for some matrix $\Psi$, \eqref{likelihood_equations2}
is satisfied, then $\Psi=\Psi\left(\Gamma\right)$ and
the likelihood function can be written as a function of $\Gamma$ only.
The resulting profile likelihood needs to be maximized
with respect to $\Gamma$.
Let $\Gamma$ be parameterized by
\begin{displaymath}
\Gamma=
\begin{pmatrix}
a & b\\
b & 1\\
\end{pmatrix},
\end{displaymath}
with
\begin{equation}
\label{assumpab}
a>b^{2},
\end{equation}
and where
the second diagonal element is taken to be 1 in order to avoid identifiability problems.
Then the likelihood function can be considered as a function $L\left(a,b\right)$
of $a$ and $b$.
If $L$ attains a global maximum for
 some pair of values $(a,b)$, then
$a$ and $b$ need to satisfy
\begin{subequations}
\begin{align}
\frac{\partial}{\partial a}L\left(a,b\right)&=0\label{dLda},\\
\frac{\partial}{\partial b}L\left(a,b\right)&=0.\label{dLdb}
\end{align}
\end{subequations}
It turns out that each solution of \eqref{dLda} that also satisfies assumption
 \eqref{assumpab} can be expressed as
 $\left(a,b\right)=\left(g\left(b\right),b\right)
 =\left(g\left(b,X_{1},\ldots,X_{n}\right),b\right)$
 with $g\left(b\right)> b^{2}$.
Furthermore, under \eqref{assumpab}
solutions of  \eqref{dLdb} have two possible forms: $\left(a,b\right)=\left(h_{1}\left(b\right),b\right)
=\left(h_{1}\left(b,X_{1},\ldots,X_{n}\right),b\right)$
or $\left(a,b\right)=\left(h_{2}\left(b\right),b\right)
=\left(h_{2}\left(b,X_{1},\ldots,X_{n}\right),b\right)$.

Explicit expressions for $g,h_{1},h_{2}$ are derived in \ref{nonuniqueness_von_Rosen}. These formulas can be easily verified by analytical software such as Mathematica.
Hence, if $n=3$, $p=q=2$, the likelihood equations have analytical solutions. It can be shown that with positive probability $g=h_{1}$ on some open interval $I$. For any $b\in I$ we have that $g\left(b\right)>b^{2}$, so all $\left(g\left(b\right),b\right)$ for $b\in I$ are solutions of the set of equations \eqref{dLda} and \eqref{dLdb}. The function $L$ is constant on this set and it takes
the same global maximum in each $\left(g\left(b\right),b\right)$.
This shows that uniqueness of the maximum likelihood estimator does
not hold for $n=3, p=q=2$.
We note that for $n=3$ and for any choice of positive definite $2$ by $2$ matrices $\Gamma$ and $\Psi$ in model \eqref{model}
the probability of observing non-uniqueness is strictly positive.

To illustrate the above, two data sets were simulated from model \eqref{model}
with $n=3$, $\mu=0$ and
\begin{displaymath}
\Gamma=
\begin{pmatrix}
0.15 & 0.24\\
0.24 & 1\\
\end{pmatrix}
\end{displaymath}
and
\begin{displaymath}
\Psi=
\begin{pmatrix}
1.69 & 0.26\\
0.26 & 0.15\\
\end{pmatrix}.
\end{displaymath}

\begin{figure}
\begin{center}
\includegraphics[height=60mm]{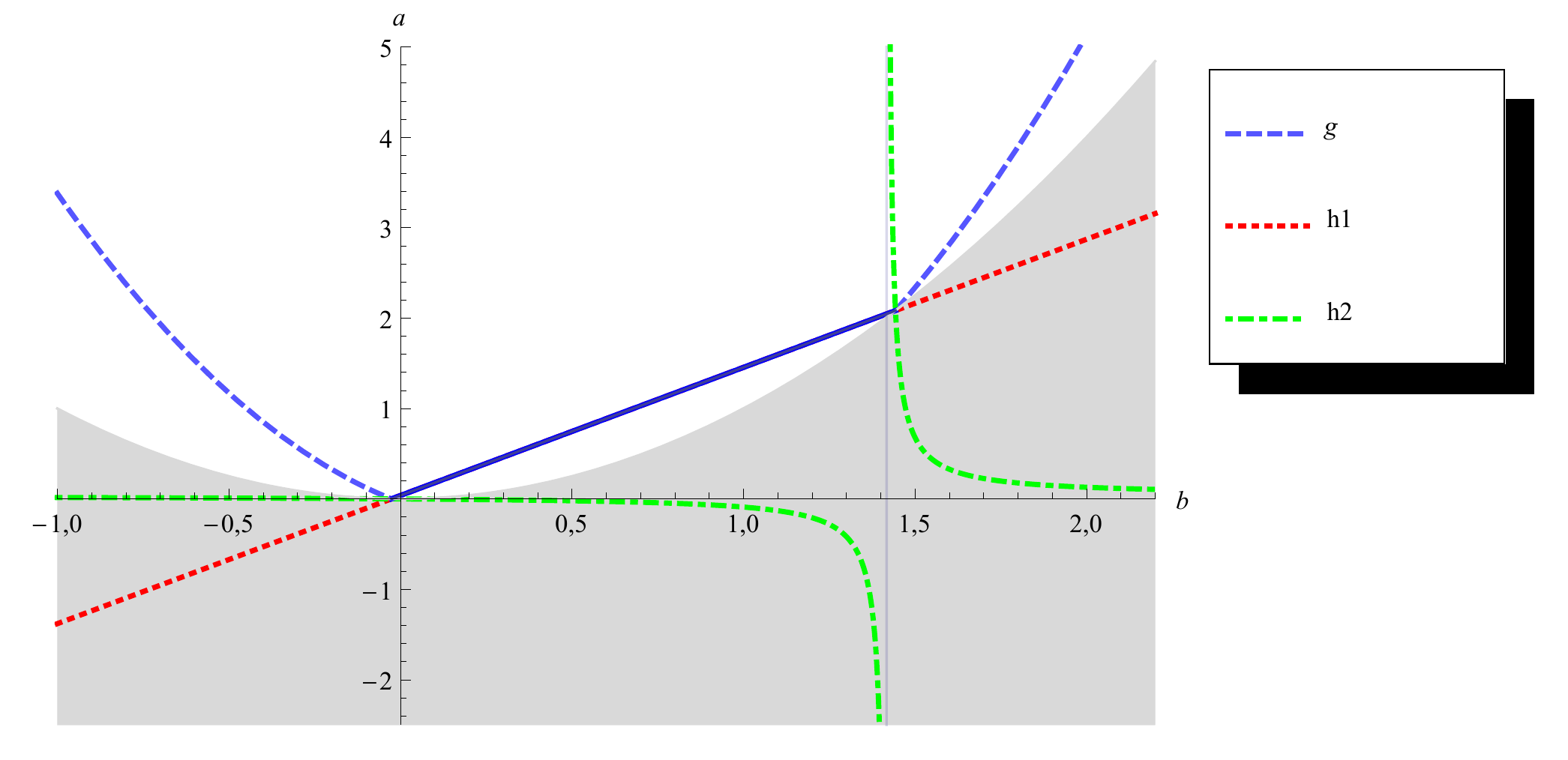}
\caption{Functions $g,h_{1},h_{2}$ for a data set
for which uniqueness does not hold in the case $n=3,p=q=2$.
Each point of the line segment corresponds to a different maximum likelihood estimator. The white area corresponds to $a>b^2$. }
\label{plot_nonuniqueness}
\includegraphics[height=60mm]{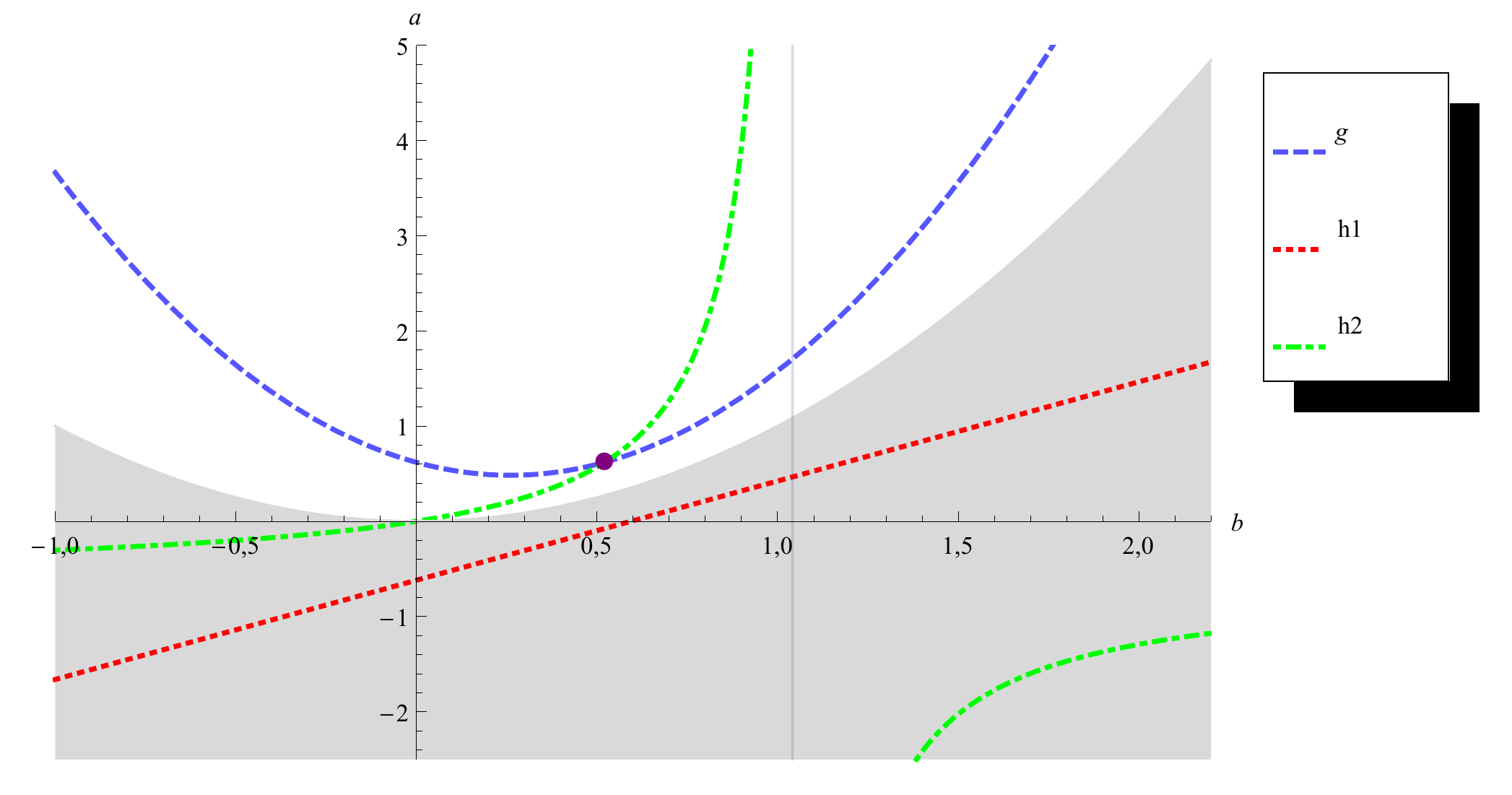}
\caption{Functions $g,h_{1},h_{2}$ for a data set for which uniqueness does hold
in the case $n=3,p=q=2$.
The point where $g$ and $h_{2}$ intersect corresponds to a unique maximum likelihood estimator.
The white area corresponds to  $a> b^2$.}
\label{plot_uniqueness}
\end{center}
\end{figure}

\noindent
For the two data sets the functions $g$, $h_{1}$, and $h_{2}$ are plotted in
Figures~ \ref{plot_nonuniqueness} and \ref{plot_uniqueness}. For the first data set uniqueness does not hold, whereas for the second one it does hold.
 This illustrates that uniqueness/non-uniqueness is data dependent.
A simulation study showed that for the above $\Gamma$ and $\Psi$ the probability of obtaining data for which non-uniqueness holds is approximately $0.8$.

\section{The diagonal model}
\label{sec3}
\noindent
In this section we consider existence and uniqueness
of the maximum likelihood estimator for $\Psi\otimes\Gamma$ of the diagonal model,
this is model \eqref{model} with the additional assumption that both matrices
$\Psi$ and $\Gamma$ are diagonal.
Let $\Gamma=diag\left(\gamma_{1},\ldots,\gamma_{p}\right)$, and $\Psi=diag\left(\psi_{1},\ldots,\psi_{q}\right)$ and $\gamma_{i},\psi_{j}>0$.
Note that $\Psi\otimes\Gamma$ is a diagonal matrix with elements $\gamma_{i}\psi_{j}$ on the diagonal. Because $\Psi\otimes\Gamma$ is a diagonal covariance matrix of a normally distributed random vector vec$\left(X_{k}\right)$, this model assumes that each $X_{k}$ consists of independent elements.

It will be convenient to introduce some additional notation. For $i=1,\ldots,p$, $j=1,\ldots,q$, let $Y^{2}_{i,j}$ be defined by
\begin{displaymath}
Y^{2}_{i,j}=\sum_{k=1}^{n}{\left(X_{k}(i,j)-\hat{M}(i,j)\right)^{2}}.
\end{displaymath}
Note that for $n\geq 2$, $P(Y_{i,j}^{2}>0)=1$, because the $X_{k}(i,j)$ are continuous
random variables.
The likelihood function for the parameters in terms of the $Y_{i,j}^{2}$ is
\begin{equation}
\label{likdiag}
L(\gamma_{1},\ldots,\gamma_{p},\psi_{1},\ldots,\psi_{q})
=\left(2\pi\right)^{-\frac{pqn}{2}}\prod_{i,j}{\left(\left(\gamma_{i}
\psi_{j}\right)^{-\frac{n}{2}}\exp\left(-\frac{1}{2}\frac{Y_{i,j}^{2}}
{\gamma_{i}\psi_{j}}\right)\right)},
\end{equation}
and the likelihood equations are
\begin{subequations}
\label{likeqdiag}
\begin{align}
\gamma_{i} &=\frac{1}{nq}\sum_{j=1}^{q}{\frac{1}{\psi_{j}}Y_{i,j}^{2}}, \qquad i=1,\ldots,p,\label{eq_double_diag1}\\
\psi_{j} &=\frac{1}{np}\sum_{i=1}^{p}{\frac{1}{\gamma_{i}}Y_{i,j}^{2}}, \qquad  j=1,\ldots,q. \label{eq_double_diag2}
\end{align}
\end{subequations}
In the remainder of the section it is assumed that $n\geq 2$.

\subsection{Existence}\label{double_diag_existence}
\noindent
In this section it will be proved that with probability 1 there exists the maximum likelihood estimator $\widehat{\Psi\otimes\Gamma}$ of the covariance matrix $\Psi\otimes\Gamma$ with diagonal $\Gamma$ and $\Psi$.
\newtheorem{exist_diag}[necessary]{Theorem}
\begin{exist_diag}
\label{exist_diag}
Let $X_{1},\ldots,X_{n}\in\mathcal{M}_{p,q}\left(\mathbb{R}\right)$ satisfy model \eqref{model} with $n\geq 2$ and the additional assumption that $\Gamma$ and $\Psi$ are diagonal matrices. Then the maximum likelihood estimator of $\Psi\otimes\Gamma$ exists with probability $1$.
\end{exist_diag}
\begin{proof}
Because for a given value of $\Gamma$, the likelihood function is maximized for $\Psi$ defined by \eqref{eq_double_diag2}, this value of $\Psi$ can be inserted in $L$, which, as a result, only depends on $\Gamma$. The convenient choice of the identifiability constraint is now $\left|\Gamma\right|=1$. The likelihood then becomes
\begin{align*}
L\left(\Gamma\right|X_{1},\ldots,X_{n})&=\left(2\pi\right)^{-\frac{pqn}{2}}\left(\prod_{j=1}^{q}{\left(\frac{1}{np}\sum_{i=1}^{p}{\frac{1}{\gamma_{i}}Y_{i,j}^{2}}\right)}\right)^{-\frac{1}{2}pn}\exp\left(-\frac{1}{2}pq\right).
\end{align*}
It is sufficient to show that this likelihood function attains its maximum for some positive definite $\hat{\Gamma}$. Such $\hat{\Gamma}$ can be used to obtain $\hat{\Psi}$ from \eqref{eq_double_diag2}. This $\hat{\Psi}$ will be positive definite with probability $1$, because $Y^{2}_{i,j}>0$ for $i=1,\ldots,p$, $j=1,\ldots,q$ with probability $1$.

First we have that maximizing $L$ with respect to $\Gamma$ is equivalent to minimizing $\left(\prod_{j=1}^{q}{\left(\frac{1}{np}\sum_{i=1}^{p}{\frac{1}{\gamma_{i}}Y_{i,j}^{2}}\right)}\right)$ with respect to $\left(\gamma_{1},\ldots,\gamma_{p}\right)$. This factor can be expressed as $\left|\sum_{i=1}^{p}{\frac{1}{\gamma_{i}}S_{i}}\right|$, where $S_{i}$ is a $q\times q$ diagonal matrix with the $j$th diagonal term equal to $\frac{1}{np}Y_{i,j}^{2}$, $j=1,\ldots,q$.
Hence we obtain the following constrained minimization problem
\begin{equation*}
\begin{aligned}
&\underset{\gamma_{1},\ldots,\gamma_{p}>0}{\text{argmin}}\left|\sum_{i=1}^{p}{\frac{1}{\gamma_{i}}S_{i}}\right|\\
&\text{ subject to } \prod_{i=1}^{p}{\gamma_{i}}=1.
\end{aligned}
\end{equation*}
From the Minkowski determinant theorem it follows that
\begin{align}
\label{S_inequality}
\left|\sum_{i=1}^{p}{\frac{1}{\gamma_{i}}S_{i}}\right|\geq\sum_{i=1}^{p}{\left|\frac{1}{\gamma_{i}}S_{i}\right|}.
\end{align}

The next step is to show that if some of $\gamma_{1},\ldots,\gamma_{p}$ are close to zero or $+\infty$, the objective function attains high values.
Let $L_{i}=\left(\frac{\left|S_{i}\right|}{\left|S_{1}+\ldots +S_{p}\right|}\right)^{\frac{1}{q}}$ and $L=\min\{L_{1},\ldots,L_{p}\}$, then, from \eqref{S_inequality}, $L\leq 1$.
We see from the definition of $L$ that if some $\gamma_{i}<L$, $\left|\frac{1}{\gamma_{i}}S_{i} \right|=\left(\frac{1}{\gamma_{i}}\right)^{q}\left|S_{i}\right|>\left|S_{1}+\ldots+S_{p}\right|$, and therefore
\begin{align*}
\left|\sum_{i=1}^{p}{\frac{1}{\gamma_{i}}S_{i}}\right|>\left|\sum_{i=1}^{p}{S_{i}}\right|.
\end{align*}
The right side of this inequality equals the value of the objective function for $\left(\gamma_{1},\ldots,\gamma_{p}\right)=\left(1,\ldots,1\right)$.
Therefore minimizing $\left|\sum_{i=1}^{p}{\frac{1}{\gamma_{}i}S_{i}}\right|$ on $\{\left(\gamma_{1},\ldots,\gamma_{p}\right):\left(\gamma_{1},\ldots,\gamma_{p}\right)\in\mathbb{R}_{+}^{p}, \prod_{i}{\gamma_{i}}=1\}$ is equivalent to minimizing it on the set $\{\left(\gamma_{1},\ldots,\gamma_{p}\right):\left(\gamma_{1},\ldots,\gamma_{p}\right)\in\left[L,+\infty\right)^{p}, \prod_{i}{\gamma_{i}}=1\}$. If we now take into account that $\prod_{i}{\gamma_{i}}=1$, it turns out that $\gamma_{i}\leq\frac{1}{L^{p-1}}$.
As a result, the set $\{\left(\gamma_{1},\ldots,\gamma_{p}\right):\left(\gamma_{1},\ldots,\gamma_{p}\right)\in\left[L,+\infty\right)^{p}, \prod_{i}{\gamma_{i}}=1\}$ equals
\begin{align*}
\{\left(\gamma_{1},\ldots,\gamma_{p}\right):\left(\gamma_{1},\ldots,\gamma_{p}\right)\in\left[L,\frac{1}{L^{p-1}}\right]^{p}, \prod_{i}{\gamma_{i}}=1\}.
\end{align*}
Because this is a compact set, there exists $\left(\hat{\gamma}_{1},\ldots,\hat{\gamma}_{p}\right)$, which belongs to this set that minimizes the function $\left|\sum_{i=1}^{p}{\frac{1}{\gamma_{i}}S_{i}}\right|$. This $\left(\hat{\gamma}_{1},\ldots,\hat{\gamma}_{p}\right)$ and $\left(\hat{\psi}_{1},\ldots,\hat{\psi}_{q}\right)$ obtained by inserting $\hat{\gamma}_{i}$ for $\gamma_{i}$ in \eqref{eq_double_diag2} correspond to $\widehat{\Psi\otimes\Gamma}$ that maximizes the likelihood function. 
\end{proof}

\subsection{Uniqueness}
\noindent
We will show that under the diagonal model with probability 1 there is at most one solution
of the likelihood equations. Combining this with the existence result from Theorem \ref{exist_diag}, we will have that for $n\geq 2$ the maximum likelihood estimator $\widehat{\Psi\otimes\Gamma}$ under the diagonal model exists and is unique with probability $1$.
For proving  uniqueness we need the following result.
\newtheorem{matrixB}[necessary]{Theorem}
\begin{matrixB}
\label{matrixB}
Let $B\in\mathcal{M}_{p,q}\left(\mathbb{R}\right)$ be parameterized as follows: $B(i,j)=\gamma_{i}\psi_{j}-\lambda_{i}\phi_{j}$, for some $\gamma_{i},\psi_{j},\lambda_{i},\phi_{j}>0$, $i=1,\ldots,p$, $j=1,\ldots,q$.
If $B$ has the property that in each column and in each row
either all elements are zero or there is at least one positive
and one negative element, then all elements of $B$ are zero.
\end{matrixB}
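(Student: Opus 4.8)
The plan is to reduce the claim to an elementary statement about the orderings of the numbers $\gamma_i/\lambda_i$ and $\phi_j/\psi_j$. Since $\lambda_i\psi_j>0$, one can write $B(i,j)=\lambda_i\psi_j\bigl(\gamma_i/\lambda_i-\phi_j/\psi_j\bigr)$, so, putting $a_i=\gamma_i/\lambda_i>0$ and $b_j=\phi_j/\psi_j>0$, the sign of $B(i,j)$ equals the sign of $a_i-b_j$; in particular $B(i,j)=0$ iff $a_i=b_j$. In these terms the hypothesis reads: for every row $i$ either $a_i=b_j$ for all $j$, or there are $j_1,j_2$ with $b_{j_1}<a_i<b_{j_2}$, and symmetrically for every column. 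The goal becomes to show that all the $a_i$ and $b_j$ coincide.

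First I would consider $m=\max\{a_1,\dots,a_p,b_1,\dots,b_q\}$ and show it is attained by some $a_{i^\ast}$. Indeed, if the maximum were attained only by some $b_{j^\ast}$, then $a_i<b_{j^\ast}$ for every $i$, hence $B(i,j^\ast)<0$ for every $i$, so column $j^\ast$ is nonzero but has no positive entry, contradicting the hypothesis. Next, since $a_{i^\ast}=m\ge b_j$ for all $j$, every entry of row $i^\ast$ satisfies $B(i^\ast,j)\ge 0$; this row therefore has no negative entry, so by hypothesis it is the zero row, forcing $b_j=m$ for all $j$. Finally, for an arbitrary column $j$ we now have $b_j=m\ge a_i$ for all $i$, so $B(i,j)\le 0$ for all $i$; column $j$ has no positive entry, hence by hypothesis is the zero column, forcing $a_i=m$ for all $i$. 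Thus $a_i=b_j=m$ for all $i,j$ and $B=0$.

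I do not expect a genuine obstacle; the only points needing care are the sign book-keeping in $B(i,j)=\lambda_i\psi_j(a_i-b_j)$ and the repeated use of the implication ``a row (column) with no negative (positive) entry must be the zero row (column)'', which is exactly where the full strength of the row-and-column sign condition enters. A symmetric argument based on $\min\{a_1,\dots,a_p,b_1,\dots,b_q\}$ would serve equally well.
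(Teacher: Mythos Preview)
Your proof is correct. The reparametrisation $a_i=\gamma_i/\lambda_i$, $b_j=\phi_j/\psi_j$ is the decisive simplification: it turns the rank-two structure $\gamma_i\psi_j-\lambda_i\phi_j$ into a pure comparison $a_i-b_j$, after which the extremal argument on $m=\max\{a_i,b_j\}$ finishes things in three short steps. Each use of the hypothesis---``a row with no negative entry must be the zero row'' and the column analogue---is applied exactly once, and the sign bookkeeping is clean.

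The paper proceeds quite differently. It first isolates an auxiliary lemma about $2\times 2$ sub-configurations: if the four entries $B(i_1,j_1),B(i_1,j_2),B(i_2,j_1),B(i_2,j_2)$ have the sign pattern $\le 0,\ge 0,\ge 0,\le 0$, then all four vanish. It then proves the theorem by induction on $p$, handling a base case $p=2$ and an inductive step that splits into two sub-cases depending on whether the matrix with the last row deleted still satisfies the hypothesis. Your approach bypasses both the lemma and the induction; it is shorter, and the extremal argument makes transparent \emph{why} the sign condition forces everything to collapse. What the paper's route offers in exchange is a local $2\times 2$ criterion that may be of independent interest, but for the theorem itself your global argument is the cleaner one.
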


\begin{proof}
See Appendix \ref{uniqueness_diag}.
\end{proof}

\noindent
The following theorem states the main result of this section.

\newtheorem{uniqueness}[necessary]{Theorem}
\begin{uniqueness}
\label{uniqueness}
Let $X_{1},\ldots,X_{n}\in\mathcal{M}_{p,q}\left(\mathbb{R}\right)$ satisfy model \eqref{model} with $n\geq 2$ and the additional assumption that $\Gamma$ and $\Psi$ are diagonal matrices. Then with probability $1$ there exists a unique maximum likelihood estimator of the covariance matrix $\Psi\otimes\Gamma$.
\end{uniqueness}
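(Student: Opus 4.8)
The plan is to combine the existence result of Theorem~\ref{exist_diag} with a uniqueness argument for the likelihood equations \eqref{likeqdiag}, using Theorem~\ref{matrixB} as the key tool. Since Theorem~\ref{exist_diag} already gives that with probability~$1$ a maximizer $\widehat{\Psi\otimes\Gamma}$ exists, and every maximizer corresponds to a solution of the likelihood equations, it suffices to show that with probability~$1$ the system \eqref{eq_double_diag1}--\eqref{eq_double_diag2}, together with an identifiability constraint, admits at most one solution $\left(\hat\Gamma,\hat\Psi\right)$ with all $\gamma_i,\psi_j>0$. Because of the scaling freedom $c\Psi\otimes\frac1c\Gamma=\Psi\otimes\Gamma$, I will work with the Kronecker product itself: two solutions $\left(\Gamma,\Psi\right)$ and $\left(\Lambda,\Phi\right)$ of the likelihood equations are identified whenever $\Psi\otimes\Gamma=\Phi\otimes\Lambda$, i.e.\ whenever $\gamma_i\psi_j=\lambda_i\phi_j$ for all $i,j$. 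So the goal is: if $\left(\gamma_i,\psi_j\right)$ and $\left(\lambda_i,\phi_j\right)$ both satisfy \eqref{likeqdiag}, then $\gamma_i\psi_j=\lambda_i\phi_j$ for all $i,j$ with probability~$1$.

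The main step is to feed the difference of the two solutions into Theorem~\ref{matrixB}. Define $B\in\mathcal M_{p,q}\left(\mathbb R\right)$ by $B(i,j)=\gamma_i\psi_j-\lambda_i\phi_j$; this is exactly the parameterization required in Theorem~\ref{matrixB}. I would show that $B$ satisfies the sign hypothesis of that theorem — in each row and each column, either all entries are zero, or there is both a strictly positive and a strictly negative entry. Here is where the likelihood equations enter. Fix a row $i$. From \eqref{eq_double_diag1} applied to both solutions,
\begin{displaymath}
\gamma_i=\frac{1}{nq}\sum_{j=1}^{q}\frac{Y_{i,j}^{2}}{\psi_j},\qquad
\lambda_i=\frac{1}{nq}\sum_{j=1}^{q}\frac{Y_{i,j}^{2}}{\phi_j},
\end{displaymath}
so that
\begin{displaymath}
\sum_{j=1}^{q}Y_{i,j}^{2}\left(\frac{\gamma_i}{\psi_j}-\frac{\lambda_i}{\phi_j}\right)=nq\left(\gamma_i-\lambda_i\right)-nq\left(\gamma_i-\lambda_i\right)=0 .
\end{displaymath}
Wait — more carefully, $\frac{1}{nq}\sum_j Y_{i,j}^2\bigl(\frac{1}{\psi_j}-\frac{1}{\phi_j}\bigr)\cdot$(something) must be manipulated so that the relevant combination is $\sum_j \frac{Y_{i,j}^2}{\psi_j\phi_j}B(i,j)$, up to a positive factor. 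The computation to carry out is: multiply $B(i,j)=\gamma_i\psi_j-\lambda_i\phi_j$ by $\frac{Y_{i,j}^2}{\psi_j\phi_j}$ and sum over $j$; using \eqref{eq_double_diag1} for both solutions this telescopes to a multiple of $\gamma_i\lambda_i\bigl(\text{something that vanishes}\bigr)$, yielding $\sum_{j}\frac{Y_{i,j}^{2}}{\psi_j\phi_j}B(i,j)=0$. Since $Y_{i,j}^2>0$ for all $i,j$ with probability~$1$ (as noted before \eqref{likdiag}) and $\psi_j,\phi_j>0$, the coefficients $\frac{Y_{i,j}^2}{\psi_j\phi_j}$ are all strictly positive, so a sum of them times $B(i,j)$ equals zero only if the $B(i,j)$ are either all zero or of mixed sign. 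The identical argument using \eqref{eq_double_diag2} handles each column. Hence $B$ meets the hypothesis of Theorem~\ref{matrixB}, so $B\equiv 0$, i.e.\ $\gamma_i\psi_j=\lambda_i\phi_j$ for all $i,j$, which is precisely $\Psi\otimes\Gamma=\Phi\otimes\Lambda$. Thus the likelihood equations have at most one solution modulo the scaling, and combined with Theorem~\ref{exist_diag} the maximum likelihood estimator of $\Psi\otimes\Gamma$ exists and is unique with probability~$1$.

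The step I expect to be the main obstacle is the exact algebraic manipulation producing the identity $\sum_{j}\frac{Y_{i,j}^{2}}{\psi_j\phi_j}B(i,j)=0$ from the two copies of the likelihood equation — getting the weighting factor right so that the row sums of the weighted $B$ genuinely vanish, and making sure the same choice of weights simultaneously works for the column argument via \eqref{eq_double_diag2}. Once that identity is in hand, the positivity of the $Y_{i,j}^2$ (a probability-one event for $n\ge 2$) and Theorem~\ref{matrixB} do the rest essentially for free. I would also need to remark at the end that uniqueness of $\widehat{\Psi\otimes\Gamma}$ is the same as uniqueness of $\left(\hat\Gamma,\hat\Psi\right)$ under any one of the identifiability constraints discussed in the introduction, so there is no residual ambiguity beyond the harmless scalar.
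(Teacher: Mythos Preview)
Your plan is correct and matches the paper's proof: show that any two solutions of \eqref{likeqdiag} produce a matrix $B$ satisfying the sign hypothesis of Theorem~\ref{matrixB}, hence $B\equiv 0$, and combine with Theorem~\ref{exist_diag}. The only difference is the choice of $B$. With your $B(i,j)=\gamma_i\psi_j-\lambda_i\phi_j$, the row identity you are after is
\[
\sum_{j}\frac{Y_{i,j}^{2}}{\psi_j\phi_j}\,B(i,j)
=\gamma_i\sum_{j}\frac{Y_{i,j}^{2}}{\phi_j}-\lambda_i\sum_{j}\frac{Y_{i,j}^{2}}{\psi_j}
=\gamma_i\cdot nq\lambda_i-\lambda_i\cdot nq\gamma_i=0,
\]
using \eqref{eq_double_diag1} for each solution; but for columns you must switch the weight to $Y_{i,j}^{2}/(\gamma_i\lambda_i)$ and use \eqref{eq_double_diag2}. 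Your stated worry that a \emph{single} weight must serve both directions is unnecessary: Theorem~\ref{matrixB} only requires that some strictly positive coefficients make each row sum and each column sum vanish, and these coefficients need not agree. The paper sidesteps the asymmetry by working with reciprocals, setting $B(i,j)=\frac{1}{\gamma_i\psi_j}-\frac{1}{\lambda_i\phi_j}$; dividing \eqref{eq_double_diag1} through by $\gamma_i$ (respectively $\lambda_i$) and subtracting gives $\sum_j Y_{i,j}^{2}B(i,j)=0$, and the symmetric manipulation of \eqref{eq_double_diag2} gives $\sum_i Y_{i,j}^{2}B(i,j)=0$, so the uniform weight $Y_{i,j}^{2}$ works for both rows and columns. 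Either parameterization completes the argument.
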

\begin{proof}
Existence of the maximum likelihood estimator was shown in Theorem \ref{exist_diag}. We will first show that any two solutions of the likelihood equations are equivalent.
Let $\left(\Gamma,\Psi\right)=(diag(\gamma_{1},\ldots,
\gamma_{p}),diag(\psi_{1},\ldots,\psi_{q}))$  with
 $\gamma_{i}>0$, $i\in\{1,\ldots,p\}$, and $\psi_j>0$, $j\in\{1,\ldots,q\}$,
 be a solution of the likelihood equations
\eqref{likeqdiag} for the diagonal model.
Assume that $\left(\Lambda,\Phi\right)$ with $\Lambda=diag\left(\lambda_{1},\ldots,\lambda_{p}\right)$ and $\Phi=diag\left(\phi_{1},\ldots,\phi_{q}\right)$, where
$\lambda_{i}>0$, $i\in\{1,\ldots,p\}$, and $\phi_j>0$, $j\in\{1,\ldots,q\}$,
is another solution of \eqref{likdiag}. We will show that $\Psi\otimes\Gamma=\Phi\otimes\Lambda$.
Consider the matrix $B$ with $B(i,j)=\widetilde{\gamma}_{i}\widetilde{\psi}_{j}-\widetilde{\lambda}_{i}
\widetilde{\phi}_{j}$.
where $\widetilde{\gamma}_{i}=\frac{1}{\gamma_{i}}$, $\widetilde{\psi}_{j}=\frac{1}{\psi_{j}}$, $\widetilde{\lambda}_{i}=\frac{1}{\lambda_{i}}$, $\widetilde{\phi}_{j}=\frac{1}{\phi_{j}}$.

\noindent
From \eqref{likeqdiag} it follows that
\begin{displaymath}
\sum_{i=1}^{p}{\left(\widetilde{\gamma}_{i}\widetilde{\psi_{j}}-
\widetilde{\lambda_{i}}\widetilde{\phi_{j}}\right)Y_{i,j}^{2}}=0, \qquad j=1,\ldots,q,
\end{displaymath}
\begin{displaymath}
\sum_{j=1}^{q}{\left(\widetilde{\gamma}_{i}\widetilde{\psi}_{j}-\widetilde{\lambda}_{i}
\widetilde{\phi}_{j}\right)Y_{i,j}^{2}}=0, \qquad i=1,\ldots,p.
\end{displaymath}

\noindent
Because $Y^{2}_{i,j}> 0$ with probability 1 for all $i,j$,
we have with probability 1 that either $\widetilde{\gamma}_{i}\widetilde{\psi}_{j}-\widetilde{\lambda}_{i}\widetilde{\phi}_{j}=0$,
for all $i$ and $j$,
or for each $i$ and for each $j$ there is at least one positive
and one negative $\widetilde{\gamma}_{i}\widetilde{\psi}_{j}
-\widetilde{\lambda}_{i}\widetilde{\phi}_{j}$.
But this means that the matrix $B$ satisfies the assumptions
of Theorem \ref{matrixB}.  Hence, it holds that with probability 1, $\frac{1}{\gamma_{i}\psi_{j}}=\frac{1}{\lambda_{i}\phi_{j}}$, for all $i$ and $j$.
This means that $\Psi\otimes\Gamma=\Phi\otimes\Lambda$.

Now we consider the maximum likelihood estimator of the covariance matrix $\Psi\otimes\Gamma$. From Theorem \ref{exist_diag}, we know that the likelihood function attains its global maximum. Because this global maximum is also a local maximum, the likelihood equations must be satisfied at this point. The fact that any two solutions of the likelihood equations correspond to the same Kronecker product implies that the maximum likelihood estimator of $\Psi\otimes\Gamma$ is unique.
\end{proof}

In some situations, the mean vector $\mu$ of the model  is known and does not need to
be estimated. We then have the following result.

\newtheorem{corr}[necessary]{Corollary}
\begin{corr}
\label{corr}
Let $X_{1},\ldots,X_{n}\in\mathcal{M}_{p,q}\left(\mathbb{R}\right)$ satisfy model \eqref{model} with known mean vector $\mu=\text{vec}\left(M\right)$ and the additional assumption that $\Gamma$ and $\Psi$ are diagonal matrices. If $n\geq 1$, then with probability $1$ there exists a unique maximum likelihood estimator of the covariance matrix $\Psi\otimes\Gamma$.
\end{corr}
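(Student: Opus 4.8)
The plan is to observe that Corollary~\ref{corr} follows by rerunning the proofs of Theorems~\ref{exist_diag} and~\ref{uniqueness} with the empirical mean $\hat M$ replaced throughout by the known matrix $M$. First I would redefine, for $i=1,\ldots,p$ and $j=1,\ldots,q$,
\[
Y_{i,j}^{2}=\sum_{k=1}^{n}\left(X_{k}(i,j)-M(i,j)\right)^{2},
\]
and note that the likelihood function in the known-mean diagonal model has exactly the form \eqref{likdiag} with these $Y_{i,j}^{2}$, and that the corresponding likelihood equations are again \eqref{likeqdiag} (with $\hat M$ replaced by $M$). The single structural difference with the unknown-mean case is that no degree of freedom is spent on estimating $\mu$: since the $X_{k}(i,j)$ are continuous random variables, $Y_{i,j}^{2}$ is a sum of $n$ squared independent nondegenerate variables, so $P(Y_{i,j}^{2}>0)=1$ already for $n\ge 1$, whereas in the unknown-mean case this required $n\ge 2$. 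This is precisely why the sample-size threshold drops from $n\ge 2$ to $n\ge 1$.

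With this in hand, existence is obtained verbatim as in Theorem~\ref{exist_diag}: for fixed $\Gamma$ the likelihood is maximized by the $\Psi$ given by \eqref{eq_double_diag2}; imposing the identifiability constraint $|\Gamma|=1$ reduces the problem to minimizing $\bigl|\sum_{i=1}^{p}\frac{1}{\gamma_{i}}S_{i}\bigr|$ over $\{(\gamma_{1},\ldots,\gamma_{p})\in\mathbb{R}_{+}^{p}:\prod_{i}\gamma_{i}=1\}$, where $S_{i}$ is the diagonal matrix with $j$th entry $\frac{1}{np}Y_{i,j}^{2}$. The Minkowski determinant inequality together with the argument using $L=\min_{i}\bigl(|S_{i}|/|S_{1}+\cdots+S_{p}|\bigr)^{1/q}$ confines any minimizer to the compact set $[L,L^{-(p-1)}]^{p}\cap\{\prod_{i}\gamma_{i}=1\}$, so a minimizer $\hat\Gamma$ exists, and the associated $\hat\Psi$ from \eqref{eq_double_diag2} is positive definite with probability $1$ because $Y_{i,j}^{2}>0$ almost surely. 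The only point to check is that the $S_{i}$ are positive definite with probability one, which again reduces to $P(Y_{i,j}^{2}>0)=1$, valid already for $n\ge 1$.

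Uniqueness then follows exactly as in Theorem~\ref{uniqueness}: given two solutions $(\Gamma,\Psi)$ and $(\Lambda,\Phi)$ of the likelihood equations, the matrix $B$ with $B(i,j)=\tfrac{1}{\gamma_{i}\psi_{j}}-\tfrac{1}{\lambda_{i}\phi_{j}}$ satisfies, because $Y_{i,j}^{2}>0$ almost surely, the hypotheses of Theorem~\ref{matrixB} --- in every row and every column its entries are either all zero or contain both a positive and a negative entry --- whence $B=0$, i.e.\ $\Psi\otimes\Gamma=\Phi\otimes\Lambda$. Since the global maximum guaranteed by the existence part is in particular a local maximum and hence satisfies the likelihood equations, this gives uniqueness of $\widehat{\Psi\otimes\Gamma}$.

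I do not expect a genuine obstacle here; the substance of the proof is the bookkeeping observation that nothing in the proofs of Theorems~\ref{exist_diag}, \ref{matrixB} and~\ref{uniqueness} uses $n\ge 2$ except through the fact $P(Y_{i,j}^{2}>0)=1$, and that replacing $\hat M$ by a fixed $M$ leaves the likelihood for $(\Gamma,\Psi)$ structurally unchanged. The mild point worth spelling out is exactly why the threshold becomes $n\ge 1$: with the mean known, $Y_{i,j}^{2}$ remains strictly positive almost surely even when $n=1$, which is all the existence and uniqueness arguments require.
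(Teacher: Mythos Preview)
Your proposal is correct and follows essentially the same approach as the paper: the paper's own proof simply states that one reruns the unknown-mean argument with $Y_{i,j}^{2}=\sum_{k=1}^{n}(X_{k}(i,j)-M(i,j))^{2}$ in place of the centered version, noting that $n\ge 1$ now suffices because no degree of freedom is lost to estimating $\mu$. Your write-up is more detailed than the paper's one-line sketch, but the route is identical.
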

\begin{proof}
The proof is similar to the proof for the case where $\mu$ is unknown, but
now with $Y^{2}_{i,j}=\sum_{k=1}^{n}X_{k}(i,j)^{2}$
instead of $Y^{2}_{i,j}=\sum_{k=1}^{n}{\left(X_{k}(i,j)-\hat{M}(i,j)\right)^{2}}$. Here $n\geq 1$ is sufficient, because we do not lose a degree of freedom for estimating the mean vector.
\end{proof}

\section{The model with one diagonal component}
\noindent
We now consider the case where only one out of two covariance components is diagonal, while the other does not have additional restrictions other than positive definiteness. Without loss of generality, it is assumed that the model satisfies \eqref{model} with the additional assumption that $\Gamma=\text{diag}\left(\gamma_{1},\ldots,\gamma_{p}\right)$ with $\gamma_{1},\ldots,\gamma_{p}>0$. Under these assumptions, the likelihood equations for the covariance components are
\begin{subequations}
\label{lik_eq2KP}
\begin{align}
\Gamma &=\frac{1}{nq}\sum_{k=1}^{n}{\text{diag}\left(\left(X_{k}-\hat{M}\right)\Psi^{-1}\left(X_{k}-\hat{M}\right)^{T}\right)} \label{eq_diag}\\
\Psi &=\frac{1}{np}\sum_{k=1}^{n}{\left(X_{k}-\hat{M}\right)^{T}\Gamma^{-1}\left(X_{k}-\hat{M}\right)} \label{eq_nondiag}
\end{align}
\end{subequations}
\subsection{Existence}
\noindent
It will be shown that the condition $n>q$ guarantees existence of the maximum likelihood estimator with probability $1$, which is expressed by the following Theorem.
\newtheorem{existence_diagonal}[necessary]{Theorem}
\begin{existence_diagonal}
\label{existence_diagonal}
Let $X_{1},\ldots,X_{n}\in\mathcal{M}_{p,q}\left(\mathbb{R}\right)$ satisfy model \eqref{model} with the additional assumption that $\Gamma$ is a diagonal matrix. If $n>q$, then the maximum likelihood estimator of $\Psi\otimes\Gamma$ exists with probability $1$.
\end{existence_diagonal}
\begin{proof}
Reasoning as in the proof of Theorem  \ref{exist_diag}, we define the constrained minimization problem
\begin{equation*}
\begin{aligned}
&\underset{\gamma_{1},\ldots,\gamma_{p}>0}{\text{argmin}}\left|\sum_{i=1}^{p}{\frac{1}{\gamma_{i}}S_{i}}\right|\\
&\text{ subject to } \prod_{i=1}^{p}{\gamma_{i}}=1,
\end{aligned}
\end{equation*}
where now $S_{i}$ is the $q\times q$ matrix with $j_{1},j_{2}$th element equal to \\$\sum_{k=1}^{n}{\left(X_{k}\left(i,j_{1}\right)-\hat{M}\left(i,j_{1}\right)\right)\left(X_{k}\left(i,j_{2}\right)-\hat{M}\left(i,j_{2}\right)\right)}$. This means that $S_{i}$ is proportional to the sample covariance matrix restricted to row $i$ and because $n>q$, $S_{1},\ldots,S_{p}$ are positive definite with probability 1. Repeating the arguments in the proof of Theorem \ref{exist_diag}, it can be concluded that there exists $\left(\hat{\gamma}_{1},\ldots,\hat{\gamma}_{p}\right)$ that minimizes $\left|\sum_{i=1}^{p}{\frac{1}{\gamma_{i}}S_{i}}\right|$. It can be used to obtain $\hat{\Psi}$ from \eqref{eq_nondiag} such that for $\hat{\Gamma}=\text{diag}\left(\hat{\gamma}_{1},\ldots,\hat{\gamma}_{p}\right)$, $\widehat{\Psi\otimes\Gamma}=\hat{\Psi}\otimes\hat{\Gamma}$ maximizes the likelihood function.
\end{proof}
\subsection{Uniqueness for the case $p=2$}
\noindent
For  $n>q$, and the additional assumption that the diagonal component is a 2 by 2 matrix ($p=2$), we now prove that the maximum likelihood estimator of the covariance matrix is unique.
\newtheorem{uniqueness_1d}[necessary]{Theorem}
\begin{uniqueness_1d}
\label{uniqueness_1d}
Let $X_{1},\ldots,X_{n}\in\mathcal{M}_{p,q}\left(\mathbb{R}\right)$ satisfy model \eqref{model} with the additional assumption that $\Gamma$ is a diagonal matrix. If $n>q$ and $p=2$, then the maximum likelihood estimator of $\Psi\otimes\Gamma$ is unique with probability $1$.
\end{uniqueness_1d}
\begin{proof}
The same as in the proof of Theorem \ref{existence_diagonal}, we express maximum likelihood estimation as the following constrained minimization problem 
\begin{equation}
\begin{aligned}
\label{constr_pro_p2}
\underset{\gamma_{1},\gamma_{2}>0}{\text{argmin}}
\left|\frac{1}{\gamma_{1}}S_{1}+\frac{1}{\gamma_{2}}S_{2}\right|\\
\text{subject to } \gamma_{1}\gamma_{2}=1.
\end{aligned}
\end{equation}
Because $S_{1},S_{2}$ are positive definite, there exists a simultaneous diagonalization (matrix $A$ such that $A^{T}S_{1}A=\widetilde{S}_{1}$ and $A^{T}S_{2}A=\widetilde{S}_{2}$ are diagonal). Therefore
\begin{displaymath}
 \underset{\gamma_{1},\gamma_{2}>0}{\text{argmin}}
\left|\frac{1}{\gamma_{1}}S_{1}+\frac{1}{\gamma_{2}}S_{2}\right|=
\underset{\gamma_{1},\gamma_{2}>0}{\text{argmin}}
\left|\frac{1}{\gamma_{1}}\widetilde{S}_{1}+\frac{1}{\gamma_{2}}\widetilde{S}_{2}\right|,
\end{displaymath}
 subject to $\gamma_{1}\gamma_{2}=1$. Because $\widetilde{S}_{1},\widetilde{S}_{2}$ are diagonal, this minimization problem is equivalent to the case of both components being diagonal. From the properties of the maximum likelihood estimator of the covariance matrix under the diagonal model, we have that the constrained minimization problem \eqref{constr_pro_p2} has a solution that is unique. This solution corresponds to the unique maximum likelihood estimator of $\Psi\otimes\Gamma$.
\end{proof}
\section{Discussion}
\noindent
For three multivariate normal models with a Kronecker product covariance
matrix---the unrestricted Kronecker product model, the diagonal
Kronecker product model and the model with one component diagonal and one unrestricted--- we studied maximum likelihood estimation for the mean vector as well as for the
covariance matrix. Because in practice Kronecker product
models are more and more used in cases where $n<<pq$, it is important
to not only consider large $n$, but to investigate what happens for
smaller $n$ as well.

The diagonal model has good properties with respect to existence and uniqueness of the maximum likelihood estimator of the covariance matrix. It holds that for $n\geq 2$ with probability $1$ there is a unique product $\widehat{\Psi\otimes\Gamma}$ that maximizes the likelihood function.

Contrary to the diagonal model the unrestricted model has not been completely
successfully examined yet with respect to the uniqueness and existence of the maximum likelihood estimator of the covariance matrix. Although \cite{Dutilleul} declares a particular condition
to be necessary and sufficient for the existence of the maximum likelihood estimator, in fact it is not known whether this condition guarantees existence. We proved that a stronger condition is sufficient, and showed that neither of the commonly suggested conditions ($n>\max\{\frac{p}{q},\frac{q}{p}\}$, $n>\max\{p,q\}$) guarantees uniqueness.
Our results are in line with numerical studies described in \cite{lu_zim}. The counterexamples suggest that while estimating $\Psi\otimes\Gamma$ for the model  vec$\left(X\right)\sim\mathcal{N}\left(\mu,\Psi\otimes\Gamma\right)$ by
maximum likelihood, one should be aware of possible problems with uniqueness or existence of the covariance matrix estimator. In practice this means that when the flip-flop algorithm or any other numerical procedure is used to obtain an approximation of the maximum likelihood estimate,  the resulting  value of the computational procedure is in the case of non-uniqueness an approximation to only one of the possible  maximum likelihood estimates, whereas in the case of non-existence the resulting value cannot even be (an approximation of) a maximum likelihood estimate.

Finally, the model with only one diagonal component inherits some of the properties from the diagonal model. It turns out that if the sample size is bigger than the dimension of the unrestricted component, there exists the maximum likelihood estimator of the covariance matrix. Moreover, we have shown that the estimator is unique if the diagonal component is a $2$ by $2$ matrix.

\appendix

\section{Maximizing $L\left(a,b\right)$}
\label{nonuniqueness_von_Rosen}
\noindent
Recall that
$\Gamma=
\begin{pmatrix}
a & b\\
b & 1\\
\end{pmatrix}$
and that we have condition \eqref{assumpab} which says that $a>b^{2}$.
Because we have assumed that \eqref{likelihood_equations2}
is satisfied, and $n=3, p=q=2$, it follows from \eqref{likelihood}
that the likelihood function satisfies
\begin{displaymath}
L\left(a,b\right)=\frac{e^{-6}}{\left(2\pi\right)^{6}\left|
\begin{pmatrix}
a & b\\
b & 1\\
\end{pmatrix}
\right|^{3}\left|\Psi\left(
\begin{pmatrix}
a & b\\
b & 1\\
\end{pmatrix}
\right)\right|^{3}},
\end{displaymath}
where $\Psi(\Gamma)=\frac{1}{6}\sum_{k=1}^{3}{\left(X_{k}-\hat{M}\right)^{T}
\Gamma^{-1}\left(X_{k}-\hat{M}\right)}$.
Obviously, maximization of  $L$ with respect to $a$ and $b$ is equivalent to maximization of
\begin{displaymath}
\widetilde{L}\left(a,b\right)=\left|
\begin{pmatrix}
a & b\\
b & 1\\
\end{pmatrix}
\right|^{-1}
\left|\Psi\left(
\begin{pmatrix}
a & b\\
b & 1\\
\end{pmatrix}
\right)\right|^{-1}
\end{displaymath}
with respect to $a$ and $b$.
Because
$\begin{pmatrix}
a & b\\
b & 1\\
\end{pmatrix}
^{-1}=
\frac{1}{a-b^{2}}
\begin{pmatrix}
1 & -b\\
-b & a\\
\end{pmatrix}$
\\
and from the properties of the determinant, straightforward calculation yields
\begin{displaymath}
\widetilde{L}\left(a,b\right)=\frac{a-b^{2}}{\left|\frac{1}{6}
\sum_{k=1}^{3}{\left(X_{k}-\hat{M}\right)^{T}
\begin{pmatrix}
1 & -b\\
-b & a\\
\end{pmatrix}
\left(X_{k}-\hat{M}\right)}\right|}.
\end{displaymath}
which can be written as
\begin{displaymath}
\widetilde{L}\left(a,b\right)=\frac{a-b^{2}}{C+B_{1}b+A_{1}a+ABab+B_{2}b^{2}+A_{2}a^{2}}
\end{displaymath}
for some constants $A_{1}, B_{1}, AB, A_{2}, B_{2}$ and $C$ that only depend on the data.
Solving the likelihood equations \eqref{dLda} and \eqref{dLdb} for $a$
and $b$ under the assumption
\eqref{assumpab} thus amounts to
solving
\begin{equation}
\label{dLtda}
\frac{\partial}{\partial a}\widetilde{L}\left(a,b\right)=0
\end{equation}
and
\begin{equation}
\label{dLtdb}
\frac{\partial}{\partial b}\widetilde{L}\left(a,b\right)=0
\end{equation}
for $a$ and $b$ under \eqref{assumpab}.
Solving \eqref{dLtda} for $a$ with \eqref{assumpab}, we obtain one solution $a=g\left(b\right)$.
Solving \eqref{dLtdb} with respect to $a$, yields two solutions,
$a=h_{1}\left(b\right)$ and
$a=h_{2}\left(b\right)$.
The functions $g(b)$, $h_{1}(b)$ and $h_{2}(b)$ are defined by
\begin{equation}
\label{ghh}
\begin{array}{rl}
g\left(b\right) = & b^{2}+\left|b^2+V_1b+V_2\right|,\\
h_{1}\left(b\right)  = & -V_1b-V_2,\\
h_{2}\left(b\right)= & -V_2b/(b+V_3),
\end{array}
\end{equation}
where $V_1$ and $V_2$ are
functions of the data, this is, of the matrix elements of $X_{1},X_{2},X_{3}$.
They are given by
\begin{align*}
V_1=\frac{-r_{2,2} r_{3,1}+r_{2,1} r_{3,2}+r_{1,2} r_{4,1}-r_{1,1} r_{4,2}}{-r_{3,2} r_{4,1}+r_{3,1} r_{4,2}}, &
\quad
V_2=\frac{-r_{1,2} r_{2,1}+r_{1,1} r_{2,2}}{-r_{3,2} r_{4,1}+r_{3,1} r_{4,2}},&\\
V_3=\frac{-r_{2,2} r_{3,1}+r_{2,1} r_{3,2}+r_{1,2} r_{4,1}-r_{1,1} r_{4,2}}{-r_{3,2} r_{4,1}+r_{3,1} r_{4,2}},
\end{align*}
where for $k=1,2,3$,
\begin{displaymath}
r_{1,k}=X_{k}(1,1)-\frac{1}{3}\sum_{s=1}^{3}{X_{s}(1,1)},
\qquad
r_{2,k}=X_{k}(1,2)-\frac{1}{3}\sum_{s=1}^{3}{X_{s}(1,2)},
\end{displaymath}
\begin{displaymath}
r_{3,k}=X_{k}(2,1)-\frac{1}{3}\sum_{s=1}^{3}{X_{s}(2,1)},
\qquad
r_{4,k}=X_{k}(2,2)-\frac{1}{3}\sum_{s=1}^{3}{X_{s}(2,2)}.
\end{displaymath}
We note that all points $\left(a,b\right)$ such that $a=g\left(b\right)=h_{1}\left(b\right)$ or $a=g\left(b\right)=h_{2}\left(b\right)$ are solutions of both equations
\eqref{dLtda} and \eqref{dLtdb}. If additionally, \eqref{assumpab} holds, these solutions
 result in positive definite maximum likelihood
estimators of $\Gamma$ and $\Psi$.
It turns out that in order to investigate whether or not
the maximum likelihood estimator is
unique, one only needs to check the discriminant of the quadratic polynomial $W\left(b\right)$
defined by
\begin{displaymath}
W\left(b\right)=b^2+V_1 b+V_2.
\end{displaymath}
To see this, we first note that $W$ is a second degree polynomial in $b$
with coefficients that depend only on the data.
Next, since  from \eqref{ghh} we have that $g\left(b\right)=b^{2}+\left|W\left(b\right)\right|$
and $h_{1}\left(b\right)=b^{2}-W\left(b\right)$, it follows that
 \begin{displaymath}
   g\left(b\right) = \left\{
     \begin{array}{lr}
       h_{1}\left(b\right), &  W\left(b\right)<0,\\
       b^{2}+W\left(b\right), &  W\left(b\right)\geq 0.
     \end{array}
   \right.
\end{displaymath}

\noindent
Thus, if the discriminant of $W$ is positive, there exists an interval $I$ on which $W$
is negative. The functions $g$ and $h_{1}$ coincide on this interval and for $b\in I$, $a=g\left(b\right)$ the condition \eqref{assumpab} is satisfied. $L$ is constant on this interval. It means that each $\left(g\left(b\right),b\right)$ for $b\in I$ corresponds to a local maximum of the likelihood function.
It is not difficult to show that $L$ does not attain any higher values anywhere else. Therefore there are infinitely many $\Psi\otimes\Gamma$ that maximize the likelihood function, so the maximum likelihood estimator for the covariance matrix is not uniquely defined.

If the discriminant of $W$ is negative, the equation $g(b)=h_{1}(b)$ is never satisfied
and solving the equation $g(b)=h_{2}(b)$ for $b$
leads to three solutions. Two of them involve the square root of the discriminant of $W$, thus they are not real. The third one is real and it corresponds to a $b$ such that $g\left(b\right)>b^{2}$.
This means that only in this case there is exactly one solution of the likelihood equations. It is easy to see that
this solution satisfies \eqref{assumpab} and corresponds to a unique global maximum.
So in this case the maximum likelihood estimator of the covariance matrix is uniquely defined.

We note that the discriminant of $W$ is a continuous function
of  continuous random variables on $(-\infty,+\infty)$.
Since for the two data sets corresponding to Figures~ \ref{plot_nonuniqueness} and \ref{plot_uniqueness} the discriminant of $W$ is positive and negative, respectively, the probability that the discriminant is positive and the probability that it is negative are both positive.
If the discriminant of $W$ equals zero, $g(b)=h_{1}(b)=h_{2}(b)=b^2$ in exactly one point $(a=b^2,b)$.
However, for this point the assumption \eqref{assumpab} is not satisfied,
and this solution is not allowed.  The event of the discriminant
of $W$ being zero has probability 0, though.

In conclusion, by computing the discriminant of $W$ for a given data set it can be determined whether there is unique maximum likelihood estimator for $\Psi\otimes\Gamma$ or not. Moreover, it is straightforward to simulate a data set for which maximum likelihood estimator of $\Psi\otimes\Gamma$ is not unique. Having such a data set and using the function $g$, one can obtain all possible values of $\Psi\otimes\Gamma$ that correspond to the global maxima of the likelihood function.

\section{Proof of Theorem \ref{matrixB}}
\label{uniqueness_diag}
\noindent
For proving Theorem \ref{matrixB} we need the following lemma.
\newtheorem{lem}[necessary]{Lemma}
\begin{lem}
\label{lem}
Suppose $\gamma_{i_{1}},\gamma_{i_{2}},\lambda_{i_{1}},\lambda_{i_{2}},
\psi_{j_{1}},\psi_{j_{2}},\phi_{j_{1}},\phi_{j_{2}}>0$.
If

\noindent\begin{tabularx}{\textwidth}{@{}XXX@{}}
  \begin{equation}
  \gamma_{i_{1}}\psi_{j_{1}}-\lambda_{i_{1}}\phi_{j_{1}}\leq 0,
\label{first_ineq}
  \end{equation} &
  \begin{equation}
  \gamma_{i_{1}}\psi_{j_{2}}-\lambda_{i_{1}}\phi_{j_{2}}\geq 0,
\label{second_ineq}
  \end{equation} \\
  \begin{equation}
  \gamma_{i_{2}}\psi_{j_{1}}-\lambda_{i_{2}}\phi_{j_{1}}\geq 0,
\label{third_ineq}
  \end{equation} &
  \begin{equation}
  \gamma_{i_{2}}\psi_{j_{2}}-\lambda_{i_{2}}\phi_{j_{2}}\leq 0,
\label{fourth_ineq}
  \end{equation}
\end{tabularx}
then it holds that
\begin{displaymath}
\gamma_{i_{1}}\psi_{j_{1}}-\lambda_{i_{1}}\phi_{j_{1}}=0,
\qquad
\gamma_{i_{1}}\psi_{j_{2}}-\lambda_{i_{1}}\phi_{j_{2}}=0,
\end{displaymath}
\begin{displaymath}
\gamma_{i_{2}}\psi_{j_{1}}-\lambda_{i_{2}}\phi_{j_{1}}=0,
\qquad
\gamma_{i_{2}}\psi_{j_{2}}-\lambda_{i_{2}}\phi_{j_{2}}=0.
\end{displaymath}
\end{lem}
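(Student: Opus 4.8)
The plan is to turn the four bilinear inequalities \eqref{first_ineq}--\eqref{fourth_ineq} into a chain of inequalities among four positive ratios. Since every one of $\gamma_{i_{1}},\gamma_{i_{2}},\lambda_{i_{1}},\lambda_{i_{2}},\psi_{j_{1}},\psi_{j_{2}},\phi_{j_{1}},\phi_{j_{2}}$ is strictly positive, I may divide any of the inequalities by such a quantity (or by a product of them) without reversing its direction. Concretely, divide \eqref{first_ineq} by $\lambda_{i_{1}}\psi_{j_{1}}$, \eqref{second_ineq} by $\lambda_{i_{1}}\psi_{j_{2}}$, \eqref{third_ineq} by $\lambda_{i_{2}}\psi_{j_{1}}$, and \eqref{fourth_ineq} by $\lambda_{i_{2}}\psi_{j_{2}}$. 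With the positive numbers
\[
r=\frac{\gamma_{i_{1}}}{\lambda_{i_{1}}},\qquad s=\frac{\gamma_{i_{2}}}{\lambda_{i_{2}}},\qquad u=\frac{\phi_{j_{1}}}{\psi_{j_{1}}},\qquad v=\frac{\phi_{j_{2}}}{\psi_{j_{2}}},
\]
the inequalities \eqref{first_ineq}, \eqref{second_ineq}, \eqref{third_ineq}, \eqref{fourth_ineq} become, respectively, $r\le u$, $r\ge v$, $s\ge u$, and $s\le v$.

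The second step is simply to chain these: $v\le r\le u\le s\le v$, which forces $r=s=u=v$. I then read the relevant equalities back in terms of the original parameters by multiplying again by the positive factors that were divided out: $r=u$ gives $\gamma_{i_{1}}\psi_{j_{1}}=\lambda_{i_{1}}\phi_{j_{1}}$, $r=v$ gives $\gamma_{i_{1}}\psi_{j_{2}}=\lambda_{i_{1}}\phi_{j_{2}}$, $s=u$ gives $\gamma_{i_{2}}\psi_{j_{1}}=\lambda_{i_{2}}\phi_{j_{1}}$, and $s=v$ gives $\gamma_{i_{2}}\psi_{j_{2}}=\lambda_{i_{2}}\phi_{j_{2}}$. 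These are exactly the four asserted identities, so the proof is complete.

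There is essentially no substantive obstacle here; the only thing that needs care is the bookkeeping of which positive quantity divides which inequality, so that the four inequality signs assemble into a single cycle $v\le r\le u\le s\le v$, together with the elementary but essential observation that strict positivity of all eight parameters is precisely what licenses dividing without sign changes. If one prefers to avoid introducing ratios, the same conclusion can be reached multiplicatively: because $(\gamma_{i_{1}}\psi_{j_{1}})(\gamma_{i_{2}}\psi_{j_{2}})=(\gamma_{i_{1}}\psi_{j_{2}})(\gamma_{i_{2}}\psi_{j_{1}})$ and likewise for the $\lambda,\phi$ products, multiplying \eqref{first_ineq} with \eqref{fourth_ineq} and \eqref{second_ineq} with \eqref{third_ineq} (valid since all terms are positive) squeezes $(\gamma_{i_{1}}\psi_{j_{1}})(\gamma_{i_{2}}\psi_{j_{2}})$ and $(\lambda_{i_{1}}\phi_{j_{1}})(\lambda_{i_{2}}\phi_{j_{2}})$ to be equal, and equality in a product of two positive numbers that are each $\le$ (respectively $\ge$) a corresponding positive number forces equality factor by factor.
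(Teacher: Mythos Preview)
Your proof is correct and follows essentially the same approach as the paper: both divide the four inequalities by appropriate positive quantities to obtain comparisons between ratios, then chain these to force equality. Your presentation is slightly more streamlined---introducing all four ratios $r,s,u,v$ at once yields the single cycle $v\le r\le u\le s\le v$, whereas the paper first shows $\lambda_{i_{1}}/\gamma_{i_{1}}=\lambda_{i_{2}}/\gamma_{i_{2}}$ and then feeds this back into the remaining inequalities---but the underlying idea is identical.
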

\begin{proof}

\noindent
From \eqref{first_ineq} and \eqref{third_ineq} we obtain that $\frac{\lambda_{i_{2}}}{\gamma_{i_{2}}}\phi_{j_{1}}\leq\frac{\lambda_{i_{1}}}{\gamma_{i_{1}}}\phi_{j_{1}}$. Because $\phi_{j_{1}}>0$, we thus have $\frac{\lambda_{i_{2}}}{\gamma_{i_{2}}}\leq
 \frac{\lambda_{i_{1}}}{\gamma_{i_{1}}}$.
Similarly, from \eqref{second_ineq} and \eqref{fourth_ineq} we get $\frac{\lambda_{i_{2}}}{\gamma_{i_{2}}}\phi_{j_{2}}\geq\frac{\lambda_{i_{1}}}{\gamma_{i_{1}}}\phi_{j_{2}}$, and because $\phi_{j_{2}}>0$, we find $\frac{\lambda_{i_{2}}}{\gamma_{i_{2}}}\geq\frac{\lambda_{i_{1}}}{\gamma_{i_{1}}}$.
Therefore it must hold that $\frac{\lambda_{i_{2}}}{\gamma_{i_{2}}}=\frac{\lambda_{i_{1}}}{\gamma_{i_{1}}}$.
Using this result, we can rewrite inequalities
\eqref{third_ineq} and \eqref{fourth_ineq}:
\begin{equation}\label{third}
\psi_{j_{1}}\geq\frac{\lambda_{i_{1}}}{\gamma_{i_{1}}}\phi_{j_{1}},
\end{equation}
\begin{equation}\label{fourth}
\psi_{j_{2}}\leq\frac{\lambda_{i_{1}}}{\gamma_{i_{1}}}\phi_{j_{2}}.
\end{equation}

\noindent
Equations \eqref{first_ineq} and \eqref{third} imply $\psi_{j_{1}}=\frac{\lambda_{i_{1}}}{\gamma_{i_{1}}}\phi_{j_{1}}$, while \eqref{second_ineq} and \eqref{fourth} imply $\psi_{j_{2}}=\frac{\lambda_{i_{1}}}{\gamma_{i_{1}}}\phi_{j_{2}}$.
Therefore $\gamma_{i_{1}}\psi_{j_{1}}=\lambda_{i_{1}}\phi_{j_{1}}$
and $\gamma_{i_{1}}\psi_{j_{2}}=\lambda_{i_{1}}\phi_{j_{2}}$.
Because $\frac{\lambda_{i_{2}}}{\gamma_{i_{2}}}=\frac{\lambda_{i_{1}}}{\gamma_{i_{1}}}$,
it holds that $\psi_{j_{1}}=\frac{\lambda_{i_{2}}}{\gamma_{i_{2}}}\phi_{j_{1}}$ and
$\psi_{j_{2}}=\frac{\lambda_{i_{2}}}{\gamma_{i_{2}}}\phi_{j_{2}}$,
so that we also have  $\gamma_{i_{2}}\psi_{j_{1}}=\lambda_{i_{2}}\phi_{j_{1}}$
and $\gamma_{i_{2}}\psi_{j_{2}}=\lambda_{i_{2}}\phi_{j_{2}}$, and the lemma is proved.
\end{proof}

\noindent
We are now ready to prove Theorem \ref{matrixB}.

\begin{proof}[\textbf{Proof of Theorem \ref{matrixB}}\newline]
We will prove the theorem by induction.
First assume $p=2$, $q\geq 2$.
Suppose $\gamma_{1},\gamma_{2},\psi_{1},\ldots,\psi_{q}>0$ and $\lambda_{1},\lambda_{2},\phi_{1},\ldots,\phi_{q}>0$.
Consider the matrix
\begin{displaymath}
B=
\begin{bmatrix}
\left(\gamma_{1}\psi_{1}-\lambda_{1}\phi_{1}\right) & \ldots & \left(\gamma_{1}\psi_{q}-\lambda_{1}\phi_{q}\right)\\
\left(\gamma_{2}\psi_{1}-\lambda_{2}\phi_{1}\right) & \ldots & \left(\gamma_{2}\psi_{q}-\lambda_{2}\phi_{q}\right)
\end{bmatrix}
\end{displaymath}
and suppose that $B$ satisfies the assumptions of Theorem \ref{matrixB}.
It will be shown that all elements of $B$ are zero.

Take an arbitrary $k\in\{1,\ldots,q\}$ and suppose that  $B(1,k)$ is non-negative.
This implies that $B(2,k)$  must be non-positive.
Now consider the first row of $B$. There exists $r\in\{1,\ldots,q\}$ such that $r\neq k$ and
$B(1,r)$ is non-positive. Therefore $B(2,r)$ is non-negative.
This means that by Lemma \ref{lem} the elements $B(1,k)$, $B(2,k)$, $B(1,r)$, and
$B(2,r)$ of $B$ are zero.
If, instead, $B(1,k)$ is non-positive, analogous reasoning yields the same result.
Hence, we have shown that all elements in the $k$-th column of $B$ are equal to zero.
Since $k$ was arbitrary, all elements of  $B$ are zero.

Next, assume that for $2\leq p\leq k$, $q\geq 2$ it holds that if $B$ satisfies
the assumptions of Theorem \ref{matrixB},
then all elements of $B$ are equal to zero.
Consider the case $p=k+1$, $q\geq 2$.
Let $b_{k+1}$ denote the $\left(k+1\right)$-th row of $B$, and
 $B_{-(k+1)}$  the matrix $B$ with $b_{k+1}$ omitted.
We can have one of two situations:
\begin{enumerate}
\item $B_{-(k+1)}$ satisfies the assumptions of Theorem \ref{matrixB} for $\left(p,q\right)=\left(k,q\right)$.
\item $B_{-(k+1)}$ has a column that only contains non-zero elements of the same sign.
\end{enumerate}
In the first situation, we obtain from the inductive assumption for $\left(p,q\right)=\left(k,q\right)$
that all elements of  $B_{-(k+1)}$ equal zero. Thus, because $B$ satisfies the assumptions of Theorem \ref{matrixB}, all elements of $b_{k+1}$ are zero too.

In the second situation,  there exists at least one column of  $B_{-(k+1)}$ such that the
signs of all elements of this column are the same, say positive.
Therefore the first $k$ elements of the corresponding column of $B$ are
positive and the last one negative.
Because $B$ satisfies the assumptions of Theorem \ref{matrixB}, row $b_{k+1}$
must contain an element which is non-negative.
Let $s$ be the column of this element. Again due to
the assumptions of Theorem \ref{matrixB}, in the $s$-th column of $B$, in a row different from the $\left(k+1\right)$-th there is an element which is negative:

\begin{displaymath}
B=
\begin{bmatrix}
\ldots & + & \ldots & \ldots & \ldots \\
\ldots & + & \ldots & \ldots & \ldots \\
\vdots & & \vdots & \vdots \\
\ldots & \oplus & \ldots & \ominus & \ldots \\
\vdots & & \vdots & \vdots \\
\ldots & + & \ldots & \ldots & \ldots \\
\ldots & \ominus & \ldots & \oplus & \ldots \\
\end{bmatrix}.
\end{displaymath}

\noindent
By Lemma \ref{lem}, all circled elements are zero. We can repeat this reasoning to show that all positive elements of $b_{k+1}$ are zero. Since $B$ satisfies the assumptions of Theorem \ref{matrixB},
$b_{k+1}$ cannot contain any strictly negative elements either. It shows that
also in this situation all elements of this row are zero.
But this means that $B_{-(k+1)}$ must also satisfy the assumptions of Theorem \ref{matrixB}
and that by the inductive assumption  all elements of  $B_{-(k+1)}$ are zero too.
This finishes the proof of the theorem.
\end{proof}

\section*{Acknowledgements}
\noindent
This work was financially supported by a STAR-cluster grant from the Netherlands Organization of Scientific Research.

\end{document}